\documentclass[12pt]{article}

\usepackage[english]{babel}

\usepackage[letterpaper,top=2cm,bottom=2cm,left=3cm,right=3cm,marginparwidth=1.75cm]{geometry}

\usepackage{amsmath}
\usepackage{graphicx}
\usepackage[colorlinks=true, allcolors=blue]{hyperref}
\usepackage{amsfonts}
\usepackage{amssymb}
\usepackage{mathrsfs}
\usepackage{mathtools}
\usepackage{hyperref}
\usepackage{color,xcolor}
\usepackage[dvipsnames]{xcolor}
\usepackage{graphicx}
\usepackage{indentfirst}
\usepackage{amsmath}
\usepackage{amsfonts}
\usepackage{amssymb}
\usepackage{mathrsfs}
\usepackage{mathtools}
\usepackage{hyperref}
\usepackage{color,xcolor}
\usepackage{graphicx}
\usepackage{amsmath}
\usepackage{pifont}
\usepackage{ntheorem}
\usepackage{dsfont}
\usepackage{ntheorem}
\DeclareMathOperator*{\esssup}{ess\,sup}
\allowdisplaybreaks[2]

\newcommand{\enabstractname}{Abstract}

\newenvironment{enabstract}{%
      \par
      \noindent\mbox{}\hfill{\bfseries\enabstractname}\hfill\mbox{}\par
      \vskip 2.5ex}{\par\vskip 2.5ex}
\title{Uniqueness for nonlinear Fokker-Planck equations with general diffusion terms and their associated nonlinear Markov processes}
\author{Viorel Barbu \thanks{Al.I. Cuza University and Octav Mayer Institute of Mathematics of Romanian Academy, Ia\c{s}i, Romania}
\and Yuqi Li \thanks{Faculty of Mathematics, Bielefeld University, D-33501 Bielefeld, Germany}
\and Michael Röckner$^{\dag}$ \thanks{Academy for Mathematics and Systems Science, CAS, Beijing, China and School of Data Science, The Chinese University of Hong Kong, Shenzhen (CUHK-Shenzhen), China} }

\begin{document}
\maketitle

\newtheorem{remark}{Remark}[section]
    \newtheorem{definition}{Definition}[section]
    \newtheorem{lemma}{Lemma}[section]
    \newtheorem{corollary}{Corollary}[section]
    \newtheorem{claim}{Claim}[section]
    \newtheorem{theorem}{Theorem}[section]
    \newtheorem{proposition}{Proposition}[section]
    \newtheorem*{proof}{Proof.}
    \newtheorem{example}{Example}[section]

\begin{enabstract}
    This work is concerned with the uniqueness of distributional solutions to nonlinear Fokker-Planck equations with non-diagonal diffusion terms of type
\begin{equation}
   u_{t}-\sum_{i,j=1}^{d} D^{2}_{ij}(a_{ij}(x)\beta(x,u))+ \text{div}(b(x,u)u)=0  \quad \text{in}\;  (0, \infty) \times \mathbb{R}^{d} ,\notag
\end{equation}
with initial condition $u(0,x)\equiv u_{0}(x)$, where $a_{ij}$, $\beta$, and $b$ are suitable functions. Under suitable assumptions, this equation generates a continuous contraction semigroup $S(t): L^{1}(\mathbb{R}^{d}) \rightarrow L^{1}(\mathbb{R}^{d})$, and $u(t)=S(t)u_{0}$ is a mild solution to the equation. Our main contribution is to prove that this mild solution is unique in the much larger class of distributional solutions. This extends previous uniqueness results for the diagonal (also called isotropic) diffusion case $a_{ij} \equiv \delta_{ij}$.
    Another key analytical result of this paper is the uniqueness for distributional solutions of the associated linearized equation. As a main application, we prove weak uniqueness for the corresponding McKean-Vlasov SDEs. Furthermore, we establish a new $L^{\infty}$ estimate for mild solutions starting from data in $L^{1}\cap L^{\infty}$ and this estimate is used in the construction of nonlinear Markov processes. Finally, we prove that the path laws of the solutions to the McKean-Vlasov SDEs form a nonlinear Markov process in the sense of McKean. \\
\textbf{Keywords}: nonlinear Fokker-Planck equation, McKean-Vlasov stochastic differential equation, distributional solution, linearized uniqueness, weak uniqueness, nonlinear Markov processes.
\end{enabstract}

\section{Introduction}
We study nonlinear Fokker-Planck equations of the form
\begin{equation}\label{*}
\begin{split}
    &\frac{\partial}{\partial t}u(t,x)-\sum_{i,j=1}^{d}D_{ij}^{2}(a_{ij}(x, u(t,x))u(t,x))+\text{div}(b(x,u(t,x))u(t,x))=0,\\
    & \quad \quad \quad \quad \quad \quad \quad \quad \quad \quad \quad \quad \quad \quad \quad\quad \quad \quad \quad \quad\quad \quad  t >0,\; x \in \mathbb{R}^{d}, \\ &u(0,x)=u_{0}(x), \; x \in \mathbb{R}^{d},
\end{split} 
\end{equation}
for diffusion terms of the type $a_{ij}(x,u)=a_{ij}(x)\beta(x,u)/u$. Thus the equation considered in this paper is
\begin{equation}\label{1}
\begin{split}
    &\frac{\partial}{\partial t}u(t,x)-\sum_{i,j=1}^{d}D_{ij}^{2}(a_{ij}(x)\beta(x,u(t,x)))+\text{div}(b(x,u(t,x))u(t,x))=0,\\
    & \quad \quad \quad \quad \quad \quad \quad \quad \quad \quad \quad \quad \quad \quad \quad\quad \quad \quad \quad \quad\quad \quad  t >0,\; x \in \mathbb{R}^{d}, \\ &u(0,x)=u_{0}(x), \; x \in \mathbb{R}^{d},
\end{split} 
\end{equation}
where $a_{ij}:\mathbb{R}^{d}\rightarrow \mathbb{R}$, $\beta :\mathbb{R}^{d}\times \mathbb{R}\rightarrow \mathbb{R}$, $b:\mathbb{R}^{d} \times \mathbb{R}\rightarrow \mathbb{R}^{d}$ are given functions. We study the existence of mild solutions (see Definition \ref{def2.1} below) and, more importantly, the uniqueness of distributional solutions (see Definition \ref{def2.2} below) for (\ref{1}) under the following hypotheses on the coefficients $a_{ij}$, $\beta$, and $b$.\\
\noindent \textcolor{black}
{\textbf{Hypothesis (\MakeUppercase{\romannumeral 1})}}\\
(\textit{\textbf{\romannumeral1}}) $\beta \in C^{1}(\mathbb{R}^{d}\times \mathbb{R})$, $\beta_{r}(x,r)\geqslant 0$ for all $x \in \mathbb{R}^{d}$ and $r \in \mathbb{R}$; $\beta (x,0)\equiv0$ for all $x \in \mathbb{R}^{d}$; $\beta_{r}\in L^{\infty}(\mathbb{R}^{d}\times (-N,N))$ for every $N >0$.\\
\noindent (\textcolor{black}{\textit{\textbf{\romannumeral2}}}) $a_{ij} \in C^{\infty}(\mathbb{R}^{d})\cap C_{b}(\mathbb{R}^{d})$, $(a_{ij})_{x}  \in C_{b}(\mathbb{R}^{d};\mathbb{R}^{d})$, $a_{ij}=a_{ji}$, and $\exists \gamma >0$ such that
\begin{equation}
    \sum\limits_{i,j=1}^{d}a_{ij}(x)\xi_{i}\xi_{j} \geqslant \gamma \left | \xi \right |^{2}, \quad \forall \xi \in \mathbb{R}^{d}, x \in \mathbb{R}^{d}. \notag  
\end{equation} 
\noindent (\textcolor{black}{\textit{\textbf{\romannumeral3}}}) $ b \in C_{b}(\mathbb{R}^{d+1};\mathbb{R}^{d})$ and $b(x, \cdot) \in C^{1}(\mathbb{R};\mathbb{R}^{d})$ for all $x \in \mathbb{R}^{d}$.  \\
\noindent (\textcolor{black}{\textit{\textbf{\romannumeral4}}}) For each compact $K \subseteq \mathbb{R}$, $\exists \alpha_{K}\in (0,\infty)$ such that 
\begin{equation}
    \left | b(x, r_{1})r_{1}-b(x, r_{2})r_{2} \right | \leqslant \alpha_{K}\left | \beta (x,r_{1})-\beta (x,r_{2}) \right |, \quad \forall r_{1},r_{2} \in K, \forall x \in \mathbb{R}^{d}.\notag 
\end{equation}
Here, $b(x,u)=\left\{b^{i}(x,u) \right\}^{d}_{i=1}$, $(a_{ij})_{x}=\nabla_{x}a_{ij}$. 
\begin{remark}
    \rm If $\displaystyle \sup_{x\in\mathbb{R}^{d}, r\in K }\left| b_{r}(x,r) \right| < \infty $, $\displaystyle \inf_{x\in\mathbb{R}^{d}, r\in K}\beta_{r}(x,r)>0$ for every compact $K \subseteq \mathbb{R}$, where $b_{r}(x,r)=\displaystyle \frac{\partial}{\partial r}b(x,r)$ and $\displaystyle \beta_{r}(x,r)=\frac{\partial}{\partial r}\beta(x,r)$, then condition (\textit{\textbf{\romannumeral4}}) can be omitted, because it then follows from (\textit{\textbf{\romannumeral3}}).
\end{remark}

Nonlinear Fokker-Planck equations (abbreviated NFPEs) arise naturally in mean-field game theory and statistical mechanics and have also found numerous applications in areas such as information theory, graph theory, and economics (see, for instance, \cite{ref16}, \cite{ref18}, \cite{ref19} and the references therein). Beyond these applications, nonlinear Fokker-Planck equations provide a powerful framework for the analysis of nonlinear diffusion phenomena and stochastic processes. A particularly important characteristic of nonlinear Fokker-Planck equations is their connection with McKean-Vlasov stochastic differential equations (also called distribution-dependent stochastic differential equations, abbreviated DDSDEs or McKean-Vlasov SDEs), whose coefficients depend not only on the solution process, but also on the one-dimensional time-marginal law of the solution. More precisely, (\ref{1}) can be used to solve the following McKean-Vlasov stochastic differential equation
\begin{equation}\label{9}
\begin{aligned}
   & dX(t)=b(X(t), u(t, X(t)))dt+ \displaystyle \sqrt{\frac{2\beta(X(t),u(t,X(t)))}{u(t,X(t))}}\sigma(X(t))dW(t),\forall t>0, \\
 & \mathcal{L}_{X(0)}(dx)=u_{0}(x)dx,\quad \mathcal{L}_{X(t)}(dx)=u(t,x)dx,\quad t>0, 
\end{aligned}    
\end{equation}
on a probability space $(\Omega ,\mathscr{F},\mathbb{P})$ with normal filtration $ (\mathscr{F}_{t})_{t \geqslant 0}$ and an $(\mathscr{F}_{t})$-Brownian motion $W(t)$ on it with values in $\mathbb{R}^{d}$. Here $\sigma=(\sigma_{ij})_{1 \leqslant i,j \leqslant d}$ with $(\sigma\sigma^{T})_{ij}=a_{ij}$, and $\mathcal{L}_{X_{t}}$ denotes the law of $X_{t}$ under $\mathbb{P}$, and $u_{0} \in \mathcal{P}$ is given.\\
\indent Equation (\ref{1}) extends the isotropic diffusion model, i.e. where $a_{ij}\equiv \delta_{ij}$, $\beta(x,u)\equiv \beta(u)$, and $b(x,u)=D(x)b(u)$, namely
\begin{equation}\label{1.}
  \begin{split}
 &\frac{\partial}{\partial t}u(t,x)-\Delta \beta (u(t,x))+ \text{div}(D(x)b(u(t,x))u(t,x))=0,\quad t> 0,x\in \mathbb{R}^{d}, \\ &
 \; u(0,x)=u_{0}(x), \quad x\in \mathbb{R}^{d}.
\end{split} 
\end{equation}
This isotropic case can also be viewed as an extension of the classical Smoluchowski equation and has been studied extensively by V. Barbu and M. R\"ockner. In \cite{ref7}, they proved the existence of mild solutions to (\ref{1.}) and of weak solutions to the associated McKean-Vlasov SDEs. They also proved existence for measures of bounded variation as initial data and established a smoothing effect for the initial data. Building on this existence theory, they further developed the uniqueness theory, in particular, they proved the existence and uniqueness of mild solutions and the uniqueness of distributional solutions in \cite{ref6, ref7, ref11}, as well as the uniqueness of weak solutions to the corresponding McKean-Vlasov SDEs. Additionally, they considered equation (\ref{1.}) with the fractional Laplacian $(-\Delta)^{s}$, $0<s<1$, in the diffusion term, namely $(-\Delta)^{s} \beta (u(t,x))$, and developed the existence and uniqueness theory for nonlinear Fokker-Planck equations and McKean-Vlasov SDEs with L\'evy noise \cite{ref12}. Recently, in \cite{ref14}, V. Barbu extended the well-posedness results for (\ref{1.}) (and for the associated McKean-Vlasov SDEs) to a more general class of nonlinear Fokker-Planck equations with singular integral drifts $\text{div}((D(x)b(u) + K * u)u)) $, where $K: \mathbb{R}^{d} \rightarrow \mathbb{R}^{d}$ is a kernel and $*$ denotes convolution. For nonlinear Fokker-Planck equations with time-dependent coefficients, to the best of our knowledge, there is only one related reference, \cite{ref9}. More specifically, V. Barbu and M. R\"ockner studied (\ref{1.}) with coefficients $\beta \equiv a(t,x,u)u$ and $Db \equiv b(t,x,u)$ and obtained existence results in appropriate Hilbert spaces. Finally, it should be mentioned that, for porous media equations (that is, $D \equiv 0$ in (\ref{1.})), uniqueness of distributional solutions was first proved by H. Brezis and M.G. Crandall \cite{ref17} and also by M. Pierre \cite{ref26}, similar results were established in \cite{ref5}, \cite{ref6} and \cite{ref27}.\\
\indent For the general diffusion terms $a_{ij}(x,u)$, the existence of mild solutions to (\ref{1}) has been obtained in \cite{ref4} via nonlinear semigroup theory in $L^{1}(\mathbb{R}^{d})$, while the uniqueness of distributional solutions to (\ref{1}) remained open. Thus, one of the main contributions of this paper is to prove uniqueness of distributional solutions to NFPE (\ref{1}), see Theorem \ref{thm3.3} and Corollary \ref{col1} below. These results are not covered by previous work. In addition, we would like to point out that we also treat the general drift term $b(x,u)$ (i.e., the case with explicit $x$-dependence). Providing a supplementary proof based on \cite{ref4}, we prove an $L^{\infty}$ estimate for mild solutions to (\ref{1}) starting from initial data in $L^{1}\cap L^{\infty}$. This estimate is essential for the solution class used subsequently, especially in Section 5, where nonlinear Markov processes are constructed, see Theorem \ref{thm3.2} and Corollary \ref{col3.2} for details. An additional major analytical result is the proof of ``linearized uniqueness'' for distributional solutions to (\ref{1}), see Theorem \ref{thm4.1} and Corollary \ref{col2}. \\
\indent Furthermore, we prove uniqueness of probabilistically weak solutions to the corresponding McKean-Vlasov SDE (\ref{9}) (see Theorem \ref{thm4.2} below), which is another main contribution of this paper. For related work on weak uniqueness of McKean-Vlasov SDEs, we refer to \cite{ref5}, \cite{ref6}, \cite{ref15}, \cite{ref21} and \cite{ref23}, but none of these papers covers the results in Theorem \ref{thm4.2}. As another important consequence, we also prove that the path laws of the solutions to McKean-Vlasov SDEs form a nonlinear Markov process in the sense of McKean (see \cite{ref29}). This concept of a nonlinear Markov process was recently elaborated by M. Rehmeier and M. R\"ockner in \cite{ref28}. The results on nonlinear Markov processes in our case are presented and proved in Section 5.\\

\noindent \textbf{Notation.} By $C^{k}(\mathbb{R}^{d})$ ($C^{k}(\mathbb{R})$, $C^{k}(\mathbb{R}^{d} \times \mathbb{R})$, respectively) we denote the space of $k$-times continuously differentiable functions on $\mathbb{R}^{d}$ ($\mathbb{R}$, $\mathbb{R}^{d} \times \mathbb{R}$, respectively); by $C_{b}(\mathbb{R}^{d})$ ($C_{b}(\mathbb{R})$, $C_{b}(\mathbb{R}^{d} \times \mathbb{R})$, respectively) we denote the space of continuous and bounded functions on $\mathbb{R}^{d}$ ($\mathbb{R}$, $\mathbb{R}^{d} \times \mathbb{R}$, respectively); and $C([0,T])$ denotes the space of continuous real-valued functions on $[0,T]$. We shall simply write $C^{k}$ for $C^{k}(\mathbb{R}^{d})$ and $C_{b}$ for $C_{b}(\mathbb{R}^{d})$.\\
\indent $L^{p}(\mathbb{R}^{d})=L^{p}$ for $1\leqslant p \leqslant \infty$ denotes the space of all real-valued, $p$-integrable functions on $\mathbb{R}^{d}$, with standard norm $\left| \cdot \right|_{p}$. By $L^{p}_{loc}$ we denote the corresponding local space. $(\cdot ,\cdot)$ shall denote the scalar product in $L^{2}$. Let $H^{k}(\mathbb{R}^{d})=H^{k}$, $k=1,2$, denote the standard Sobolev space on $\mathbb{R}^{d}$ and by $H^{-k}(\mathbb{R}^{d})=H^{-k}$ we denote the dual space of $H^{k}(\mathbb{R}^{d})$.\\
\indent $C^{\infty}_{0}(\mathbb{R}^{d})$ is the space of infinitely differentiable real-valued functions with compact support in $\mathbb{R}^{d}$. $C^{\infty}_{0}([0, \infty) \times \mathbb{R}^{d})$ denotes the space of all $\varphi \in C^{\infty}_{0}([0, \infty) \times \mathbb{R}^{d})$ with compact support in $ [0, \infty) \times \mathbb{R}^{d}$. By ${\mathcal{D}}'(\mathbb{R}^{d})$ (${\mathcal{D}}'((0, \infty) \times \mathbb{R}^{d})$, respectively) we denote the space of Schwartz distributions on $\mathbb{R}^{d}$ ($(0, \infty) \times \mathbb{R}^{d}$, respectively).\\
\indent $\mathcal{P} $ denotes the set of all probability densities on $\mathbb{R}^{d}$, that is
\begin{equation}
    \mathcal{P}= \left \{ \rho \in L^{1}(\mathbb{R}^{d}); \; \rho \geqslant 0, a.e. \text{in}\;  \mathbb{R}^{d}, \; \int_{\mathbb{R}^{d}}\rho (x)dx =1\right \}  .\notag
\end{equation}
$D_{i}u$ denotes the partial derivative of a function $u=u(x_{1}, \cdots, x_{d})$ with respect to $x_{i}$, $1 \leqslant i \leqslant d$, and by $D^{2}_{ij}u$ we denote the second order derivatives $ \frac{\partial^{2}u}{\partial x_{i} \partial x_{j}}$. We also set, for each $u \in C^{1}(\mathbb{R}^{d} \times \mathbb{R})$,
\begin{equation}
    u_{r}=\frac{\partial}{\partial r}u(x,r), \quad u_{x}=\nabla_{x}u(x,r).\notag
\end{equation}
The function $t \rightarrow (u(t,\cdot)dx)_{[0,T]}$ is said to be narrowly continuous if, for every $ \eta \in C_{b}(\mathbb{R}^{d}) $, 
\begin{equation}
    \lim_{t \rightarrow s}\int_{\mathbb{R}^{d}}\eta(x)u(t,x)dx=\int_{\mathbb{R}^{d}}\eta(x)u(s,x)dx, \quad s \geqslant 0.\notag
\end{equation}

\indent The remainder of this paper is structured as follows. Section 2 contains the preliminary definitions and results used throughout the paper. Section 3 contains the existence of mild solutions and the uniqueness of distributional solutions for NFPE (\ref{1}). In Section 4, we prove weak uniqueness for the corresponding McKean-Vlasov SDE (\ref{9}). Section 5 contains the main results on nonlinear Markov processes.

\section{Preliminaries}
\indent We now define mild and distributional solutions to NFPE (\ref{1}), which are the basic notions used in this work.
\begin{definition}\label{def2.1}\rm (mild solution)
  A continuous function $u: [0, \infty) \rightarrow L^{1}$ is called a mild solution to NFPE (\ref{1}) if, for each $T>0$,
\begin{equation}
    u(t)=\displaystyle\lim_{h \rightarrow 0}u_{h}(t) \ \text{in} \ L^{1}, \quad \text{uniformly\; on} \; [0,T], \notag    
\end{equation}
where $u_{h}:[0,T] \rightarrow L^{1}$ is the step function
\begin{equation}
    u_{h}(t)=u_{h}^{k}, \ \forall t \in [kh,(k+1)h), k=0,1, \cdots , N_{h}=[\frac{T}{h}],\notag
\end{equation}
\begin{equation}
    u_{h}^{k+1}+hA(u_{h}^{k+1})=u_{h}^{k}, \ \forall k=0,1, \cdots ,N_{h}; \ u_{h}^{0}=u_{0}. \notag
\end{equation}
Here, $A$ is the operator defined as follows
\begin{equation}
\begin{split}
    &A(u)=-\sum\limits_{i,j=1}^{d} D^{2}_{ij}(a_{ij}(x)\beta (x,u))+ \text{div}(b(x,u)u),\ \forall u \in D(A), \\&
    D(A)=\left\{u \in L^{1}; A(u) \in L^{1}\right\},\notag
\end{split}
\end{equation}
\end{definition}
where $D^{2}_{ij}$ and \text{div} are taken in the sense of Schwartz distributions on $ \mathbb{R}^{d}$.
\begin{definition}\label{def2.2} \rm (1) (distributional solution)
Let $d \geqslant 1$ and $T>0$. A function $u \in L_{\text{loc}}^{1}([0,T)\times \mathbb{R}^{d})$ is said to be a distributional solution to NFPE (\ref{1}) with initial condition $u_{0}$ if $a_{ij}(x)\beta(x,u) \in L_{\text{loc}}^{1}([0,T)\times \mathbb{R}^{d})$, $b(x,u)u \in L_{\text{loc}}^{1}([0,T)\times \mathbb{R}^{d};\mathbb{R}^{d})$, and, for all $ \varphi \in C^{\infty}_{0}([0,T)\times \mathbb{R}^{d})$ and $u_{0} \in L^{1}(\mathbb{R}^{d})$,
\begin{equation}\label{0}
\begin{split}
   \int^{T}_{0} &\int_{\mathbb{R}^{d}} \bigg(u(t,x)\frac{\partial \varphi }{\partial t}(t,x)+\sum\limits_{i,j=1}^{d}a_{ij}(x)\beta(x,u(t,x))D^{2}_{ij}\varphi(t,x)\\+& u(t,x)b(x, u(t,x)) \cdot \nabla \varphi (t,x) \bigg) dxdt +\int_{\mathbb{R}^{d}}u_{0}(x)\varphi(0,x)dx=0.  
\end{split}
\end{equation}
(2) A distributional solution to NFPE (\ref{1}) is called a probability solution to NFPE (\ref{1}) if $u_{0} \in \mathcal{P}(\mathbb{R}^{d})$, and $u(t,\cdot) \in \mathcal{P}(\mathbb{R}^{d})$ for a.e. $t \in (0.T)$. \vspace{0.35em}\\
(3) Let $\mathcal{\tilde P}(\mathbb{R}^{d})$ denote the space of all Borel probability measure on $\mathbb{R}^{d}$. Suppose $ \mathcal{P}_{0} \subset \mathcal{\tilde P}(\mathbb{R}^{d}) $ such that for each $(s, \zeta) \in [0, \infty) \times \mathcal{P}_{0}$, there exists a weakly continuous probability solution $ [s, \infty) \ni t \mapsto \mu_{t}^{s,\zeta} \in \mathcal{P}_{0}$ to NFPE (\ref{1}) with starting time $s \in [0, \infty)$ and initial condition $u_{0}$ renamed as $\zeta$, and $\mu_{t}^{s,\zeta}:=u^{s,\zeta}(t,x)dx$, such that flow property
\begin{equation}
    \mu_{t}^{s, \zeta}=\mu_{t}^{r, \mu_{r}^{s, \zeta}} ,\; \; \forall \; 0 \leqslant s \leqslant r \leqslant t, \;\; \zeta \in \mathcal{P}_{0} \notag
 \end{equation}
holds. Then $ (\mu^{s,\zeta})_{(s,\zeta) \in [0,\infty)\times \mathcal{P}_{0} }$ is called a solution flow of NFPE (\ref{1}) in $ \mathcal{P}_{0} $.

\end{definition}

\begin{remark}\label{rem2.1}
   \rm For an initial condition $u_{0} \in L^{1}$, in general, equation (\ref{1}) does not have a classical strong solution and the best expectation is to obtain mild solutions in the sense of the above definition (which turn out to be also distributional solutions to (\ref{1})). Another class of solutions are entropy solutions which is a convenient concept of solutions in statistical physics (see, e.g. \cite{ref13}, Chapter 1). But entropy solutions are too special for our purpose, since time marginal densities of solutions to McKean-Vlasov SDEs are in general not entropy solutions. Therefore, distributional solutions constitute the correct class for stochastic analysis.  
\end{remark}
\indent \indent Finally, for later use, we introduce the operator $(L,D(L))$
\begin{equation}
    L:D(L) \rightarrow {\mathcal{D}}'(\mathbb{R}^{d}), \quad Lu:=\sum\limits_{i,j=1}^{d}D_{i}(a_{ij}(x)D_{j}u), \quad D(L)= L_{\text{loc}}^{1}(\mathbb{R}^{d}). \notag
\end{equation}
$(L,D(L))$ is well-defined because $a_{ij} \in C^{\infty}(\mathbb{R}^{d}), i, j=1,2, \dots d$. Moreover, for $u:[0,T] \times \mathbb{R}^{d} \rightarrow \mathbb{R} $ and $\beta:\mathbb{R}^{d} \times \mathbb{R} \rightarrow \mathbb{R} $, we use the shorthand
\begin{equation}
    L \beta(x,u):=L [\beta(\cdot,u(t,\cdot))](x), \;\;  \text{if} \; \beta(\cdot, u(t,\cdot)) \in D(L).\notag
\end{equation}
Here and in the sequel, $\beta(x,u)$ stands for $\beta(x,u(t,x))$.

\section{Uniqueness of distributional solutions to NFPEs} 
\subsection{Existence of mild solutions to NFPEs}
\indent The existence of mild solutions to NFPE (\ref{1}) was proved in \cite{ref13}, Section 2.1 (see also \cite{ref4}). To check Hypotheses (H1)-(H3) in there, we need to introduce the following conditions.\vspace{0.2em} \\
(\textit{\textbf{\romannumeral1}}') $\beta \in C^{1}(\mathbb{R}^{d}\times \mathbb{R})$, $\beta_{r}(x,r)> \delta,\forall x \in \mathbb{R}^{d},\forall r \in \mathbb{R} $ and some $\delta>0$; $\beta (x,0)\equiv 0, \; \beta(x,r)=-\beta(x,-r), \forall x \in \mathbb{R}^{d}, \forall r \in \mathbb{R} $, and for the function $\Phi (x,r):=\displaystyle \frac{\beta(x,r)}{r}, \forall x \in \mathbb{R}^{d}, \forall r \in \mathbb{R}$, where we set $\Phi(x,0):=\displaystyle \lim_{\varepsilon \rightarrow 0}\frac{\beta(x,\varepsilon)}{\varepsilon}=\beta_{r}(x,0), x\in \mathbb{R}^{d} $, we have $\Phi \in C^{2}(\mathbb{R}^{d}\times \mathbb{R}) \cap C_{b}(\mathbb{R}^{d}\times \mathbb{R})$, $\Phi_{x} \in C_{b}(\mathbb{R}^{d}\times \mathbb{R};\mathbb{R}^{d})$,\vspace{0.25em}\\
\noindent (\textit{\textbf{\romannumeral3}}') $b \in C_{b}(\mathbb{R}^{d+1};\mathbb{R}^{d}) \cap C^{1}(\mathbb{R}^{d+1};\mathbb{R}^{d})$. 
\begin{theorem}\label{thm3.1}\cite[Thm.~2.1]{ref13}
Assume that (\textbf{\romannumeral1}'), (\textbf{\romannumeral2}) and (\textbf{\romannumeral3}') hold. Then there exists a continuous contraction semigroup $S(t): [0, \infty ) \rightarrow L^{1}$ such that, for each $u_{0} \in  L^{1}(\mathbb{R}^{d})$, $u=u(t, u_{0})=S(t)u_{0}$ is a mild solution to (\ref{1}) and 
\begin{equation}
    \left | u(t,u_{0})- u(t,\tilde{u}_{0})\right | _{1} \leqslant \left | u_{0}- \tilde{u}_{0} \right | _{1} , \quad \forall u_{0}, \tilde{u}_{0} \in L^{1}, \; t\geqslant 0\notag 
\end{equation}
\begin{equation}
    u \geqslant 0, \text{a.e.} \; \text{on}\;  (0,\infty) \times \mathbb{R}^{d} ,\quad  \text{if} \; u_{0} \geqslant 0,\text{a.e.} \; \text{on} \; \mathbb{R}^{d},   \notag 
\end{equation}
\begin{equation}
    \int_{\mathbb{R}^{d} } u(t,x)dx,=\int_{\mathbb{R}^{d} } u_{0}(x)dx, \quad \forall u_{0} \in L^{1}, \; t \geqslant 0.\notag 
\end{equation}
Moreover, $u$ is a distributional solution to (\ref{1}), that is, for all $\varphi \in C^{\infty}_{0}([0,T)\times \mathbb{R}^{d})$,
\begin{equation}
\begin{split}
   \int^{T}_{0} &\int_{\mathbb{R}^{d}} \bigg(u(t,x)\frac{\partial \varphi }{\partial t}(t,x)+\sum\limits_{i,j=1}^{d}a_{ij}(x)\beta(x,u(t,x))D^{2}_{ij}\varphi(t,x)\\+& u(t,x)b(x, u(t,x)) \cdot \nabla \varphi (t,x) \bigg) dxdt +\int_{\mathbb{R}^{d}}u_{0}(x)\varphi(0,x)dx=0 .  
\end{split}\notag
\end{equation}
\end{theorem}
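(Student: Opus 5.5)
The plan is the Crandall--Liggett route: show that $A$ (suitably closed) is $m$-accretive in $L^{1}$, then generate the semigroup $S(t)$ by the exponential formula. Everything reduces to solving the resolvent equation
\begin{equation}
u+\lambda A(u)=f,\qquad f\in L^{1},\ \lambda\in(0,\lambda_{0}),\notag
\end{equation}
i.e. $u-\lambda\sum_{i,j}D^{2}_{ij}(a_{ij}\beta(x,u))+\lambda\,\mathrm{div}(b(x,u)u)=f$. We also need the $L^{1}$-contraction estimate $|u-\tilde u|_{1}\leqslant|f-\tilde f|_{1}$ for the solutions.

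\textbf{Existence for the resolvent equation.} First treat $f\in L^{1}\cap L^{2}$. Write $\beta(x,u)=\Phi(x,u)u$ and regularize: replace $\beta$ by $\beta+\varepsilon r$ if needed, and truncate $b$. By (\textit{\textbf{i}}') we have $\beta_{r}>\delta$, and by (\textit{\textbf{ii}}) the matrix $(a_{ij})$ is uniformly elliptic. Hence the operator $u\mapsto -\sum D^{2}_{ij}(a_{ij}\beta(x,u))$ is monotone in $H^{-1}$ in the variable $\beta(x,u)$. The drift is Lipschitz relative to $\beta$ because $b\in C_{b}\cap C^{1}$. This suggests solving for $v=\beta(x,u)\in H^{1}$ by a Leray--Schauder or Minty--Browder argument in $H^{-1}$, with $u=\beta^{-1}(x,v)$.

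Expanding $D^{2}_{ij}(a_{ij}v)=D_{i}(a_{ij}D_{j}v)+$ lower-order terms involving $(a_{ij})_{x}$, the equation becomes $u-\lambda Lv+\lambda(\text{first-order terms in }v,u)=f$. For $\lambda$ small, the first-order terms are absorbed using $(a_{ij})_{x}\in C_{b}$, $\Phi_{x}\in C_{b}$ and $|b|\leqslant C$, which yields coercivity in $H^{1}$.

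\textbf{Accretivity.} Take the difference of two solutions and test with $\mathcal{X}_{\varepsilon}(\beta(x,u)-\beta(x,\tilde u))$, where $\mathcal{X}_{\varepsilon}$ is a smooth approximation of $\mathrm{sign}$. Letting $\varepsilon\to0$ gives $|u-\tilde u|_{1}\leqslant|f-\tilde f|_{1}$. The key point is that the term $\int \mathrm{div}\big(b(x,u)u-b(x,\tilde u)\tilde u\big)\mathcal{X}_{\varepsilon}(\cdot)\,dx$ vanishes in the limit. This uses the bound $|b(x,u)u-b(x,\tilde u)\tilde u|\leqslant C|\beta(x,u)-\beta(x,\tilde u)|$, which follows from $\beta_{r}>\delta$ and $b\in C^{1}$, together with the standard estimate that $\int_{\{|\beta-\tilde\beta|<\varepsilon\}}|\nabla(\beta-\tilde\beta)|^{2}/\varepsilon\to0$ type terms disappear.

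Taking $f\geqslant0$ and $\tilde f=0$, the same argument tested with the positive part gives $u\geqslant0$. Integrating the equation against cutoffs $\varphi_{R}\to1$ gives $\int u=\int f$. By density, both properties extend to all $f\in L^{1}$, and closing the operator yields an $m$-accretive $\bar A$ on $L^{1}$.

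\textbf{Semigroup and distributional solution.} By the Crandall--Liggett theorem, the limits $u_{h}\to u$ exist uniformly on $[0,T]$ in $L^{1}$, where $u_{h}$ are the implicit-Euler steps of Definition~\ref{def2.1}. This defines the contraction semigroup $S(t)$. Contraction, positivity and mass conservation are inherited from the resolvents.

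For the distributional identity, I would test the discrete scheme $u^{k+1}_{h}-u^{k}_{h}=-hA(u^{k+1}_{h})$ with $\varphi(kh,\cdot)$ and sum by parts in time. Passing to the limit $h\to0$ uses two facts: $u_{h}\to u$ in $L^{1}_{loc}$, and $\beta(x,u_{h})\to\beta(x,u)$, $b(x,u_{h})u_{h}\to b(x,u)u$. The convergence of these nonlinear terms follows since $|\beta(x,r)|\leqslant|\Phi|_{\infty}|r|$ and $b$ is bounded and continuous, so dominated and Vitali-type convergence applies.

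\textbf{Main obstacle.} I expect the hardest step to be existence for the resolvent equation, namely proving that the range condition $R(I+\lambda A)\supseteq L^{1}$ holds for small $\lambda$ with a non-divergence-form, $x$-dependent nonlinear diffusion. I would handle it by first solving in $H^{-1}$ with $f\in L^{2}\cap L^{1}$, using the smoothness of $a_{ij}$ and $\Phi$ to control commutator terms. Then I would pass to general $L^{1}$ data via the $L^{1}$ contraction.
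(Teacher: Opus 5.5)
\textbf{Comparison with the paper.} The paper does not reprove this theorem. Its proof only checks that $\tilde a_{ij}(x,u)=a_{ij}(x)\Phi(x,u)$ and $b$ satisfy (H1)--(H3) of \cite{ref13}:
\begin{itemize}
\item regularity and boundedness of $\tilde a_{ij}$ and $(\tilde a_{ij})_x$ follow from (ii) and (i');
\item the evenness $\tilde a_{ij}(x,u)=\tilde a_{ij}(x,|u|)$ follows from the oddness of $\beta$;
\item ellipticity (H2) with constant $\gamma\delta$ follows from the identity $\tilde a_{ij}+(\tilde a_{ij})_u u=a_{ij}\beta_r$.
\end{itemize}
It then quotes Theorem 2.1 of \cite{ref13}. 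Reconstructing the Crandall--Liggett proof yourself is legitimate in principle, but your reconstruction has a genuine gap at the accretivity step.

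\textbf{The gap.} You claim that $|b(x,u)u-b(x,\tilde u)\tilde u|\leqslant C|\beta(x,u)-\beta(x,\tilde u)|$ follows from $\beta_r>\delta$ and $b\in C^1$. Given $\beta_r>\delta$, this amounts to a bound on $\partial_r(b(x,r)r)=b+rb_r$ that is uniform in $x$ and $r$. Condition (iii') gives no such bound, since $b$ is only bounded, continuous and $C^1$. For instance, $b=(\sin(r^2),0,\dots,0)$ and $b=(\sin(x_1 r),0,\dots,0)$ both satisfy (iii'), yet $r\mapsto b(x,r)r$ is not Lipschitz uniformly in $x$. In the second example this fails even for $r\in[0,1]$. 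A bound of this type is exactly what condition (iv) of Hypothesis (I) adds later for uniqueness.

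You also have no $L^\infty$ bounds on the resolvent solutions that would let you use a local Lipschitz constant instead. In this paper such bounds are obtained only under the extra condition (v) (Theorem \ref{thm3.2}).

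Moreover, even with a uniform constant, the drift term in your sign test is only bounded by $\lambda C\int_{\{0<|w|<\varepsilon\}}|\nabla w|\,dx$, where $w=\beta(x,u)-\beta(x,\tilde u)$. This need not vanish: $\nabla w$ is only in $L^2$, and $\{0<|w|<\varepsilon\}$ can have infinite measure. Note also that $\varepsilon^{-1}\int_{\{|w|<\varepsilon\}}|\nabla w|^2$ does not tend to zero in general. It is the nonnegative diffusion contribution, which is simply discarded.

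\textbf{The missing idea.} What you need is the regularize-then-pass-to-the-limit structure of \cite{ref13}, recalled in the Appendix.
\begin{itemize}
\item Cut the coefficients off in both $u$ and $x$, e.g.\ $b^\epsilon=\phi(\epsilon|u|)\phi(\epsilon|x|^2)b$, and similarly $a^\epsilon_{ij}$. This gives $|\tilde b^{\epsilon,*}(x,u)-\tilde b^{\epsilon,*}(x,\bar u)|\leqslant g(x)|u-\bar u|$ with $g\in L^2$, as in (\ref{002}).
\item The drift term is then at most $C\lambda|g|_2\big(\int_{\{0<|w|<\varepsilon\}}|\nabla w|^2dx\big)^{1/2}\to0$. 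This yields contraction, positivity and mass conservation for $J^\epsilon_\lambda$, with constants independent of $\epsilon$.
\item These properties reach the original operator only through the strong convergence $J^\epsilon_\lambda f\to J_\lambda f$ in $L^1$, as in (\ref{004}). That convergence requires $\epsilon$-uniform estimates and a compactness/tightness argument on $\mathbb{R}^d$.
\end{itemize}
Your sketch supplies neither the truncation in $x$ as well as in $u$, nor this limit passage, and that is precisely the substance of the cited theorem. You also do not check $\overline{D(A)}=L^1$, which Crandall--Liggett needs in order to produce $S(t)$ on all of $L^1$.

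Under the stated hypotheses, the short correct proof is the paper's: verify (H1)--(H3) and apply the known result.
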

\begin{proof}
   \rm Set $\displaystyle \tilde a_{ij}(x,u)=\frac{a_{ij}(x)\beta(x,u)}{u}=a_{ij}(x)\Phi(x,u)$. Then, by (\textit{\textbf{\romannumeral1}}') and (\textit{\textbf{\romannumeral2}}), one readily checks that $a_{ij}$ satisfies the following condition
   
\vspace{0.2cm}
\noindent (H1) $\tilde a_{ij} \in C^{2}(\mathbb{R}^{d}\times \mathbb{R}) \cap C_{b}(\mathbb{R}^{d}\times \mathbb{R})$, $(\tilde a_{ij})_{x} \in C_{b}(\mathbb{R}^{d}\times \mathbb{R};\mathbb{R}^{d})$, $\tilde a_{ij}=\tilde a_{ji}$, $\tilde a_{ij}(x,u)=\tilde a_{ij}(x,\left|u\right|), x \in \mathbb{R}^{d}, u \in \mathbb{R}, i,j=1,2,\dots, d$.   
\vspace{0.2cm}

Since $ \tilde a_{ij}(x,u)+(\tilde a_{ij})_{u}(x,u)u=a_{ij}(x)(\Phi(x,u)+u\Phi_{u}(x,u))=a_{ij}(x)\beta_{r}(x,u)$, condition (\textit{\textbf{\romannumeral2}}) implies that $\tilde a_{ij}$ also satisfies\\
(H2) $\sum\limits_{i,j=1}^{d}\big(\tilde a_{ij}(x,u)+(\tilde a_{ij})_{u}(x,u)u\big)\xi_{i}\xi_{j} \geqslant \gamma \delta \left | \xi \right |^{2}, \quad \forall \xi \in \mathbb{R}^{d}, x \in \mathbb{R}^{d}, u \in \mathbb{R}$, for $\gamma$ as in (\textit{\textbf{\romannumeral2}}) and $\delta$ as in (\textit{\textbf{\romannumeral1}}').  \\    
By (\textit{\textbf{\romannumeral3}}'), $b_{i}$ satisfies

\vspace{0.2cm}
\noindent (H3) $b_{i} \in C_{b}(\mathbb{R}^{d} \times \mathbb{R}) \cap C^{1}(\mathbb{R}^{d} \times \mathbb{R}), i=1,2 \dots,d$.\vspace{0.2em}\\
Hence, by Theorem 2.1 in \cite{ref13}, the assertions follow.  $\hfill\square$
\end{proof}

\indent However, the results in \cite{ref13} do not include the additional property that a mild solution starting from data in $L^{1}\cap L^{\infty}$ belongs to $L^{\infty}((0,T)\times\mathbb{R}^{d})$. The following theorem whose proof is given in the Appendix,  provides estimate (\ref{5'''}) below for the corresponding general diffusion terms $a_{ij}(x,u)$ as in \cite{ref13}. It applies, in particular, to the present setting and is needed for the nonlinear Markov property proved in Section 5. 
\begin{theorem}\label{thm3.2}
 Assume that (\textbf{\romannumeral1}'), (\textbf{\romannumeral2}) and (\textbf{\romannumeral3}') hold and, in addition,
\begin{equation}\label{5'}
   \delta _{1}(r):= \sup \left\{  \left | b^{i}_{x} (x,r)\right |; \; i=1, \dots, d,\;x \in \mathbb{R}^{d}\right\}, 
\end{equation}
\begin{equation}\label{5''}
   \delta _{2}(r):= \sup \left\{  \left | (\tilde a_{ij})_{x_{i}x_{j}} (x,r)\right |; \; i,j=1, \dots, d,\;x \in \mathbb{R}^{d}\right\},
\end{equation}
where $\displaystyle \tilde a_{ij}(x,u)=\frac{a_{ij}(x)\beta(x,u)}{u}=a_{ij}(x)\Phi(x,u)$.\\
(\textbf{\romannumeral5}) $\delta_{1}$, $\delta_{2}$ $\in C_{b}(\mathbb{R})$.\\
Then, for each $u_{0} \in L^{1} \cap L^{\infty}$, a mild solution to (\ref{1}) given in Theorem \ref{thm3.1} also satisfies, for some constant $C$ 
\begin{equation}\label{5'''}
 \left|u(t)\right|_{\infty} \leqslant \text{exp}(Ct) \left|u_{0}\right|_{\infty}, \quad \forall t \in [0,T]  .
\end{equation}
\end{theorem}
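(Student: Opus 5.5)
Since the mild solution is the $L^{1}$-limit, uniformly on $[0,T]$, of the implicit Euler scheme $u_{h}^{k+1}+hA(u_{h}^{k+1})=u_{h}^{k}$, I would prove an $L^{\infty}$ bound for the resolvent: for $0<h\leqslant h_{0}$ and $f\in L^{1}\cap L^{\infty}$, the solution $y=(I+hA)^{-1}f$ should satisfy
\begin{equation}
  |y|_{\infty}\leqslant (1-Ch)^{-1}|f|_{\infty}. \notag
\end{equation}
Iterating $k\leqslant T/h$ times gives $|u_{h}(t)|_{\infty}\leqslant (1-Ch)^{-[t/h]}|u_{0}|_{\infty}$. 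As $h\to 0$ the right side tends to $\exp(Ct)|u_{0}|_{\infty}$. Since $u_{h}(t)\to u(t)$ in $L^{1}$, a subsequence converges a.e., so the bound (\ref{5'''}) passes to the limit.

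For the resolvent estimate I would write the equation $y-h\sum_{i,j}D^{2}_{ij}(\tilde a_{ij}(x,y)y)+h\,\mathrm{div}(b(x,y)y)=f$ in divergence form. Expanding
\begin{equation}
  D_{j}(\tilde a_{ij}(x,y)y)=(\tilde a_{ij}+(\tilde a_{ij})_{u}y)D_{j}y+(\tilde a_{ij})_{x_{j}}(x,y)\,y, \notag
\end{equation}
the principal part is uniformly elliptic by (H2). The lower-order terms are $h\sum_{i}D_{i}\big(\sum_{j}(\tilde a_{ij})_{x_{j}}(x,y)y\big)$ and $h\,\mathrm{div}(b(x,y)y)$. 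Let $M:=|f|_{\infty}/(1-Ch)$. I would test the equation with $(y-M)^{+}$, or with a smooth truncation of it, using the $H^{1}$-regularity of $y$ available from the construction in \cite{ref13} (possibly after a further regularization of the coefficients).

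The key observation is that $M$ nearly behaves as a supersolution. Plugging the constant $M$ into the operator leaves only the terms in which derivatives fall on the $x$-dependence of the coefficients:
\begin{equation}
  M-h\sum_{i,j}(\tilde a_{ij})_{x_{i}x_{j}}(x,M)M+h\sum_{i}b^{i}_{x_{i}}(x,M)M\geqslant M\big(1-h d^{2}\delta_{2}(M)-hd\,\delta_{1}(M)\big). \notag
\end{equation}
Hypothesis (\textbf{\romannumeral5}) makes $\delta_{1},\delta_{2}$ bounded uniformly in $r$. Taking $C=d^{2}\sup\delta_{2}+d\sup\delta_{1}$, we get $M(1-Ch)\geqslant |f|_{\infty}\geqslant f$, so $M$ is a supersolution. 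The comparison principle, i.e. $L^{1}$-accretivity of $A$ in its $T$-accretive form $|(y-z)^{+}|_{1}\leqslant |(f-g)^{+}|_{1}$, available from \cite{ref13}, then yields $y\leqslant M$. The catch is that $M\notin L^{1}$. So instead I would run the Kato-type argument directly: test with $\mathcal{X}_{\varepsilon}(y-M)$, where $\mathcal{X}_{\varepsilon}$ is a smooth approximation of $\mathrm{sign}^{+}$, and subtract the equation satisfied pointwise by $M$. The lower bound $y\geqslant -M$ follows the same way.

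The main obstacle is controlling the difference of the lower-order flux terms $\tilde a_{x}(x,y)y-\tilde a_{x}(x,M)M$ and $b(x,y)y-b(x,M)M$ on the set $\{y>M\}$. Locally in $r$ these are Lipschitz: $\Phi\in C^{2}$ and $b\in C^{1}$. After integrating by parts against $\mathcal{X}_{\varepsilon}'(y-M)\nabla y$, they are of size at most $C|y-M|\,|\nabla y|\,\mathcal{X}_{\varepsilon}'(y-M)$. This tends to zero as $\varepsilon\to 0$ because $|y-M|\mathcal{X}_{\varepsilon}'(y-M)\leqslant 1_{\{0<y-M<\varepsilon\}}$, and $\nabla y\in L^{2}$ ensures this vanishing. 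To justify the Lipschitz bound one needs $y$ bounded a priori. I would handle this by first truncating the nonlinearities at a large level $N$, proving the bound for the truncated problem (the bound does not depend on $N$), and then removing the truncation.
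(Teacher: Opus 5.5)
Your proposal is correct and is essentially the paper's own argument. The paper likewise shows that a constant level is a supersolution of the regularized resolvent equation (\ref{003}) by means of $\delta_1,\delta_2$; there the level is $|f|_\infty+M$ with $M=\frac{\lambda}{\lambda_0}|f|_\infty$, which agrees with your $|f|_\infty/(1-Ch)$ to first order in $h$. It then tests the difference with $\mathcal{X}_\delta((u_\epsilon-|f|_\infty-M)^+)$, drops the elliptic part by its sign, and lets the lower-order flux difference vanish as $\delta\to0$ using $\nabla u_\epsilon\in L^2$. Finally it passes the resolvent bound to $J_\lambda$ via (\ref{001}), (\ref{004}) and to $S(t)$ via the exponential formula (\ref{000}).

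The one point to watch is the Lipschitz constant in $u$ of the flux $b(x,u)u-\sum_j(\tilde a_{ij}(x,u)u)_{x_j}$. Under (\textbf{\romannumeral1}') and (\textbf{\romannumeral3}') this constant is not uniform in $x$, since nothing bounds $b_r$ or $\Phi_{xr}$ globally. So your bound $C|y-M|\,|\nabla y|$ must come from the $x$-localized regularization of \cite{ref13}, i.e.\ (\ref{002}) with $g\in L^2$, and not from truncating the nonlinearities in $r$ alone.
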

\subsection{Uniqueness of distributional solutions to NFPEs}
\begin{theorem}\label{thm3.3}
 Assume that \textbf{Hypothesis (\MakeUppercase{\romannumeral 1})} holds. Let $u_{1}, u_{2} \in L^{1}((0,T)\times \mathbb{R}^{d})\cap L^{\infty}((0,T)\times \mathbb{R}^{d})$ be two distributional solutions to (\ref{1}) in $(0,T)\times \mathbb{R}^{d}$ such that $u_{1}-u_{2} \in L^{\infty}((0,T);H^{-1})$ and
 \begin{equation}\label{4}
     \displaystyle\lim_{t \rightarrow 0} \displaystyle \esssup_{s \in (0,t)}\left | \left ( u_{1}(s)-u_{2}(s),\varphi  \right )_{2}\right |=0, \quad  \forall \varphi \in C^{\infty}_{0}(\mathbb{R}^{d}). 
 \end{equation}
 Then, $u_{1}\equiv u_{2} $.
\end{theorem}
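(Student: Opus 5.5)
We adapt the $H^{-1}$-method of Brezis–Crandall and Barbu–Röckner for the isotropic case, replacing $(\varepsilon I-\Delta)^{-1}$ by the resolvent of the divergence-form operator $L$. Set $z=u_1-u_2$, $w=a_{ij}$-weighted differences, namely
\begin{equation*}
w(t,x)=\beta(x,u_1(t,x))-\beta(x,u_2(t,x)),\qquad \zeta=b(x,u_1)u_1-b(x,u_2)u_2 .
\end{equation*}
Since $u_1,u_2\in L^\infty$, Hypothesis (I)(i),(iv) give $0\le w z$, $|w|\le C|z|$ and $|\zeta|\le \alpha_K|w|$ with $K=[-M,M]$, $M=\max(|u_1|_\infty,|u_2|_\infty)$. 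The first step is to rewrite $\sum_{ij}D^2_{ij}(a_{ij}w)=L w+\text{div}(\tilde b\, w)$ with $\tilde b_i=\sum_j D_j a_{ij}\in C_b$ by (ii). Thus, in $\mathcal{D}'((0,T)\times\mathbb{R}^d)$,
\begin{equation*}
z_t - L w - \text{div}(\tilde b w) + \text{div}\,\zeta=0,\qquad z(0)=0,
\end{equation*}
where the initial condition is understood through (\ref{4}).

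For $\varepsilon\in(0,1)$ let $\Psi_\varepsilon=(\varepsilon I-L)^{-1}$. By (ii) and Lax–Milgram, $\Psi_\varepsilon$ maps $H^{-1}$ to $H^1$ and is symmetric. Define $z_\varepsilon=\Psi_\varepsilon z\in L^\infty(0,T;H^1)$, and note that $L\Psi_\varepsilon z=\varepsilon z_\varepsilon-z$. To justify the time derivative, we mollify $z$ in $t$ (and, if needed, in $x$) and then pass to the limit. This gives, for a.e. $t$,
\begin{equation*}
\frac12\frac{d}{dt}(z,z_\varepsilon)_2=(z_t,z_\varepsilon)=(Lw,z_\varepsilon)-(\tilde b w,\nabla z_\varepsilon)+(\zeta,\nabla z_\varepsilon),
\end{equation*}
with $(Lw,z_\varepsilon)=(w,L z_\varepsilon)=\varepsilon(w,z_\varepsilon)-(w,z)$. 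Define $\Phi_\varepsilon(t)=(z(t),\Psi_\varepsilon z(t))=\varepsilon|z_\varepsilon|_2^2+\int a\nabla z_\varepsilon\cdot\nabla z_\varepsilon\ge \varepsilon|z_\varepsilon|_2^2+\gamma|\nabla z_\varepsilon|_2^2$. After integrating in time, (\ref{4}) together with the bound $z\in L^\infty(H^{-1})$ gives $\Phi_\varepsilon(t)\to0$ as $t\to0$, for fixed $\varepsilon$, because $\Psi_\varepsilon z(s)$ is approximable by test functions uniformly in $s$.

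The key estimate combines these pieces. We have $(w,z)\ge \frac1{C}|w|_2^2$ by monotonicity and Lipschitz continuity on $K$. The remaining terms satisfy
\begin{equation*}
|(\tilde b w-\zeta,\nabla z_\varepsilon)|\le \frac{1}{2C}|w|_2^2+C'|\nabla z_\varepsilon|_2^2 \le \frac{1}{2C}|w|_2^2+\frac{C'}{\gamma}\Phi_\varepsilon,
\end{equation*}
and $\varepsilon(w,z_\varepsilon)\le \varepsilon|w|_2|z_\varepsilon|_2\le \frac1{4C}|w|_2^2+C\varepsilon^2|z_\varepsilon|_2^2\le \frac{1}{4C}|w|_2^2+C\varepsilon\Phi_\varepsilon$. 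Hence $\Phi_\varepsilon(t)\le C''\int_0^t\Phi_\varepsilon\,ds$, and Gronwall's inequality gives $\Phi_\varepsilon\equiv0$, so $z=(\varepsilon-L)z_\varepsilon=0$. Note that the term $(w,z)$ is finite since $u_i\in L^1\cap L^\infty$.

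I expect the main obstacle to be the rigorous justification of the energy identity. Concretely, this means showing that $t\mapsto(z,\Psi_\varepsilon z)$ is absolutely continuous with the derivative computed above, when $z$ is only in $L^\infty(H^{-1})\cap L^1\cap L^\infty$ and $z_t\in L^2(0,T;H^{-1})$ (the latter follows from the equation, since $w,\zeta\in L^2$). I would handle this by a Lions–Magenes type argument, regularizing with a time mollifier $\rho_\delta$ on $(\delta,T-\delta)$. We use the symmetry of $\Psi_\varepsilon$ and verify that $\Psi_\varepsilon z\in L^2(H^1)$ is an admissible test function by density of $C_0^\infty$, which requires $a_{ij}$ smooth and bounded. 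The initial layer is then treated with (\ref{4}).
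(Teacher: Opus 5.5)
Your proposal is correct. It follows the paper's skeleton: $z=u_1-u_2$, $w=\beta(x,u_1)-\beta(x,u_2)$, the rewriting $\sum_{i,j}D^2_{ij}(a_{ij}w)=Lw+\operatorname{div}(Aw)$, the functional $(z,(\varepsilon I-L)^{-1}z)_2$, and Gronwall. Where you genuinely differ is in how you treat the term $2\varepsilon(w,\Psi_\varepsilon z)$.

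The paper mollifies in $x$ and bounds this term by $\varepsilon|\Psi_\varepsilon z_\delta|_\infty|w_\delta|_1$. It shows this vanishes (after time integration) as $\varepsilon\to0$, using the two-sided comparison between the kernel of $(\varepsilon I-L)^{-1}$ and that of $(\varepsilon I-\Delta)^{-1}$. With the commutator $R_\delta$, the error $\gamma_\delta$ and the coupling $\delta(\varepsilon)$, Gronwall then only gives $h_{\varepsilon,\delta(\varepsilon)}\to0$. The conclusion $z=0$ comes from letting $\varepsilon\to0$ in $z_\delta=\varepsilon\Psi_\varepsilon z_\delta-L\Psi_\varepsilon z_\delta$.

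You instead use $\varepsilon|\Psi_\varepsilon z|_2^2\le\Phi_\varepsilon$ to get $\varepsilon(w,\Psi_\varepsilon z)\le\eta|w|_2^2+\frac{\varepsilon}{4\eta}\Phi_\varepsilon$. This is absorbed by the coercive term $(w,z)\ge\alpha_1|w|_2^2$, so $\Phi_\varepsilon\equiv0$ for each fixed $\varepsilon$, and $z=(\varepsilon I-L)\Psi_\varepsilon z=0$ follows at once. This removes the kernel estimate, the $L^\infty$ bounds on $\varepsilon\Psi_\varepsilon$, and the limit $\varepsilon\to0$.

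The paper's mechanism is the Brezis--Crandall one, which still works when only $(w,z)\ge0$ is available (merely monotone $\beta$). However, the paper itself already needs $(w,z)\ge\alpha_1|w|_2^2$ to absorb the drift term. So under Hypothesis (I) your simplification costs nothing.

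One correction to your justification step. The claim $z_t\in L^2(0,T;H^{-1})$ is false in general. The term $Lw=\sum_{i,j}D_i(a_{ij}D_jw)$ with $w$ only in $L^2$ lies in $H^{-2}$, so in fact $z_t\in L^2(0,T;H^{-2})$. The pairing with $\Psi_\varepsilon z$ must therefore be the $H^{-2}$--$H^{2}$ one. This requires the elliptic regularity $\Psi_\varepsilon:L^2\to H^2$, which holds since $a_{ij}$ and $\nabla a_{ij}$ are bounded; Lax--Milgram $H^{-1}\to H^1$ alone is not enough.

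Equivalently, apply Lions--Magenes to $y=\Psi_\varepsilon z\in L^2(0,T;H^2)\cap H^1(0,T;L^2)$ with pivot $H^1$, as the paper does in Claim 1. This gives $z\in C([0,T];H^{-1})$ and absolute continuity of $\Phi_\varepsilon$ with $\Phi_\varepsilon'=2\langle z_t,\Psi_\varepsilon z\rangle$. Then (\ref{4}) forces $z(0)=0$, hence $\Phi_\varepsilon(0^+)=0$, and no spatial mollification is needed.
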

\begin{proof}
\rm We adapt the technique developed in \cite{ref11}, though it has to be modified to apply to our substantially more general situation. We first note that for $i=1,2$, $\beta(x,u_{i})\in L^{1}_{\text{loc}}=D(L)$. Hence
\begin{equation}
\begin{split}
 \sum_{i,j=1}^{d}D^{2}_{ij}(a_{ij}(x)\beta(x,u_{i}))
&=\sum_{i,j=1}^{d}D_i(a_{ij}(x)D_j\beta(x,u_{i}))+\operatorname{div}(A(x)\beta(x,u_{i}))\\& =L\beta(x,u_{i})+\operatorname{div}(A(x)\beta(x,u_{i})),  \notag
\end{split}
\end{equation}
where $A(x)=\{A_i(x)\}_{i=1}^{d}$ and $A_i(x)=\displaystyle \sum_{j=1}^{d}D_j a_{ij}(x)$. Below we also set $b^{*}(x,u):=b(x,u)u$, and define $N := \text{max}\left \{ \left | u_{1} \right |_{\infty}, \left | u_{2} \right |_{\infty}   \right \} $. \\
We set 
\begin{equation}
    z:=u_{1}-u_{2},\ w:=\beta(x,u_{1})-\beta(x,u_{2}),\ \zeta:=b^{*}(x, u_{1})-b^{*}(x, u_{2}), \forall x \in \mathbb{R}^{d}.\notag
\end{equation}
Then, by Hypothesis (\MakeUppercase{\romannumeral 1}) on $[0,T] \times \mathbb{R}^{d}$,
\begin{equation}\label{7a}
  \left| \zeta \right| \leqslant \alpha_{2}\left| w \right|  , 
\end{equation}
\begin{equation}\label{7b}
 wz \geqslant \alpha_{1}\left| w \right|^{2}  ,
\end{equation}
where $\alpha_{2}=\alpha_{[-N,N]}$ and $\alpha^{-1}_{1}=\left | \beta_{r}\right |_{L^{\infty}(\mathbb{R}^{d}\times[-N,N])}$. Furthermore, our conditions imply that $z,w, \zeta \in L^{1}((0,T)\times \mathbb{R}^{d})\cap L^{\infty}((0,T)\times \mathbb{R}^{d})$ and 
\begin{equation}\label{2}
    z_{t}- Lw +\text{div}(\zeta-Aw)=0, \quad \text{in} \ {\mathcal{D}}'((0,T)\times \mathbb{R}^{d}). 
\end{equation}
We define
\begin{equation}
    z_{\delta }:=z*\theta _{\delta }, w_{\delta }:=w*\theta _{\delta },\zeta _{\delta }:=\zeta*\theta _{\delta }, (Aw) _{\delta }:=(Aw)*\theta _{\delta }, (Lw) _{\delta}:=(Lw)*\theta _{\delta },\notag
\end{equation}
where $\theta \in C^{\infty}_{0}(\mathbb{R}^{d}) $, $\theta_{\delta}(x) \equiv \frac{1}{\delta ^{d}}\theta (\frac{x}{\delta })$, $\delta >0$, is a standard mollifier. Since $ z_{\delta }, w_{\delta } \in L^{2}(0,T;H^{2})$, and $\zeta_{\delta}$, $(Aw) _{\delta }$, $ (Lw)_{\delta}$, $\text{div}\zeta_{\delta }$, $\text{div}(Aw) _{\delta }$ $\in L^{2}(0,T;L^{2})$, we have
\begin{equation}\label{8*}
    (z_{\delta})_{t}- (Lw)_{\delta } +\text{div}(\zeta_{\delta }-(Aw) _{\delta })=0, \quad \text{in} \ {\mathcal{D}}'((0,T); L^{2}).
\end{equation}
We recall that by our assumptions on $a_{ij}$ and standard elliptic regularity theory (see, e.g. \cite[Theorem 1 and Theorem 2.2]{ref30}) we have that
\begin{equation}
  \epsilon I -L_{| H^{2}}: H^{2} \longrightarrow L^{2}  \notag
\end{equation}
is a homeomorphism for every $\epsilon >0$, where $I$ denotes the identity operator. We define its inverse operator $\Psi_{\varepsilon}  $ as follows: for any $\varepsilon >0$,
\begin{equation}
  \Psi_{\varepsilon}: L^{2} \rightarrow H^{2} , \notag
\end{equation}
\begin{equation}
  \Psi_{\varepsilon}(u)=(\varepsilon I-L_{| H^{2}})^{-1}u, \quad \forall u \in L^{2} . \notag
\end{equation}
Then $\Psi_{\varepsilon}(z_{\delta})$, $ \Psi_{\varepsilon}(w_{\delta})$ $\in L^{2}(0,T;H^{2})$; $\Psi_{\varepsilon}(\text{div}\zeta _{\delta })$, $ \Psi_{\varepsilon}(\text{div}(Aw) _{\delta})$ $\in L^{2}(0,T;L^{2})$. \\
Therefore, applying $\Psi_{\epsilon}$ to (\ref{8*}) yields
\begin{equation}\label{0.6}
\begin{split}
    (\Psi_{\varepsilon}(z_{\delta }))_{t}= &\Psi_{\varepsilon}(Lw_{\delta }) +\Psi_{\varepsilon}(\text{div}((Aw) _{\delta }-\zeta _{\delta }))+\Psi_{\varepsilon}(R_{\delta})\\=&L\Psi_{\varepsilon}(w_{\delta}) +\Psi_{\varepsilon}(\text{div}(Aw)_{\delta})-\Psi_{\varepsilon}(\text{div}\zeta _{\delta })+\Psi_{\varepsilon}(R_{\delta}), \quad \text{in} \ {\mathcal{D}}'((0,T); L^{2}),
\end{split}    
\end{equation}
where $R_{\delta}=(Lw)_{\delta }-Lw_{\delta}$. We note that, since $w(t), w_{\delta}(t) \in L^{2}$, we have
\begin{equation}
    (Lw(t))_{\delta} \longrightarrow Lw(t) \quad \text{in}\;\; H^{-2},\notag
\end{equation}
\begin{equation}
    Lw_{\delta}(t) \longrightarrow Lw(t) \quad \text{in}\;\; H^{-2}.\notag
\end{equation}
Since $R_{\delta}=(Lw)_{\delta }-Lw_{\delta}=(I-L)w_{\delta}-((I-L)w)_{\delta}$, for some constant $C>0$
\begin{equation}
\left| R_{\delta}(t)\right|_{-2} \leqslant C\left| w(t)\right|_{2} \quad \text{a.e.}\;\; t \in [0,T].\notag
\end{equation}
Since $w \in L^{2}(0,T;L^{2})$, we altogether obtain
\begin{equation}\label{10'}
    \lim_{\delta \rightarrow 0} \left| R_{\delta} \right|_{L^{2}(0,T;H^{-2})}=0. 
\end{equation}
We also note that
\begin{equation}\label{3}
    z_{\delta }(t)=\varepsilon\Psi_{\varepsilon}(z_{\delta }(t))-L\Psi_{\varepsilon}(z_{\delta }(t)).
\end{equation}
We set 
\begin{equation}
    h_{\varepsilon, \delta}(t)=(\Psi_{\varepsilon}(z_{\delta }(t)),z_{\delta }(t))_{2}, \quad \text{a.e.}\;t \in (0,T).\notag
\end{equation}
Below we shall prove two claims.
\begin{claim}\label{clm1}
    $ \displaystyle\lim_{t \rightarrow 0} \displaystyle \esssup_{s \in (0,t)}h_{\varepsilon,\delta }(s)=0 $.
\end{claim}
\begin{proof}
\rm By (\ref{0.6}) we have $ \frac{d}{dt}\Psi_{\varepsilon}(z_{\delta}) \in L^{2}(0,T;L^{2}) $, where $\frac{d}{dt}$ is taken in the sense of vector-valued distributions in $L^{2}$ on $(0,T)$. Thus, $z_{\delta}$, $\Psi_{\varepsilon}(z_{\delta}) \in H^{1}(0,T;L^{2})$. On the other hand, we have $\Psi_{\varepsilon}(z_{\delta}) \in L^{2}(0,T;H^{2})$.\\
Therefore, 
\begin{equation}
    \Psi_{\varepsilon}(z_{\delta}) \in C([0,T];H^{1}),\notag
\end{equation}
since the spaces $L^{2}$ and $H^{2}$ are in duality with the pivot space $H^{1}=(H^{2},L^{2})_{\frac{1}{2}}$, after modifying $t \rightarrow  \Psi_{\varepsilon}(z_{\delta}(t))$ on a subset of measure zero (see, e.g. \cite{ref2} and \cite{ref11}). Since $t \rightarrow  \Psi_{\varepsilon}(z_{\delta}(t))$ has an $H^{1}$-continuous version on $[0,T]$, there exists $f\in H^{1}$ such that 
\begin{equation}\label{5}
    \displaystyle\lim_{t \rightarrow  0} \Psi_{\varepsilon}(z_{\delta}(t))=f \quad \text{in} \ H^{1}. 
\end{equation}
By (\textbf{\romannumeral2}) and \eqref{3}, we observe that
\begin{equation}\label{8}
\begin{split}
   h_{\varepsilon, \delta}(t)=&(\Psi_{\varepsilon}(z_{\delta }(t)),z_{\delta }(t))_{2}=\int_{\mathbb{R}^{d}}\Psi_{\varepsilon}(z_{\delta}(t))(\varepsilon \Psi_{\varepsilon}(z_{\delta }(t)-L\Psi_{\varepsilon}(z_{\delta }(t))dx\\=&\varepsilon \left |\Psi_{\varepsilon}(z_{\delta }(t) \right |_{2}^{2}-\int_{\mathbb{R}^{d}}\Psi_{\varepsilon}(z_{\delta}(t)L\Psi_{\varepsilon}(z_{\delta }(t)dx\\=& \varepsilon \left |\Psi_{\varepsilon}(z_{\delta }(t) \right |_{2}^{2}+\int_{\mathbb{R}^{d}}\sum\limits_{i,j=1}^{d}a_{ij}(x)D_{j}\Psi_{\varepsilon}(z_{\delta }(t)D_{i}\Psi_{\varepsilon}(z_{\delta}(t)dx \\ \geqslant&\varepsilon \left |\Psi_{\varepsilon}(z_{\delta }(t) \right |_{2}^{2}+\gamma \left | \nabla\Psi_{\varepsilon}(z_{\delta}(t) \right |_{2}^{2} \geqslant 0.
\end{split} 
\end{equation}
Furthermore,
\begin{equation}
\begin{split}
0 \leqslant  h_{\varepsilon, \delta}(s)=(&\Psi_{\varepsilon}(z_{\delta }(s)),z_{\delta }(s))_{2} \leqslant \left |\Psi_{\varepsilon}(z_{\delta }(s)-f \right |_{H^{1}}\cdot \left |z_{\delta }(s)\right |_{H^{-1}}\\+ &\left |f -\varphi \right |_{H^{1}}\cdot \left |z_{\delta }(s)\right |_{H^{-1}}+\left |(\varphi *\theta _{\delta},z(s)) _{2}\right |,\quad \forall \varphi \in C^{\infty}_{0}(\mathbb{R}^{d}),\ s \in (0,T). \notag
\end{split}     
\end{equation} 
 Hence, by \eqref{4}, \eqref{5},
 \begin{equation}
 \begin{split}
 0 \leqslant& \displaystyle\lim_{t \rightarrow 0} \displaystyle \esssup_{s \in (0,t)}h_{\varepsilon,\delta }(s)
 \\ \leqslant & \left(\displaystyle\lim_{t \rightarrow 0}\left | \Psi_{\varepsilon}(z_{\delta }(t)-f\right |_{H^{1}}+\left |f -\varphi \right |_{H^{1}}\right )\left |z_{\delta }\right |_{L^{\infty}(0,T;H^{-1})}+\displaystyle\lim_{t \rightarrow 0} \displaystyle \esssup_{s \in (0,t)}\left | \left ( \varphi *\theta _{\delta },z(s)  \right )_{2}\right |\\=&\left |f -\varphi \right |_{H^{1}}\left |z_{\delta }\right |_{L^{\infty}(0,T;H^{-1})}.
 \end{split}\notag
 \end{equation}
Therefore, Claim \ref{clm1} follows since $ C^{\infty}_{0}(\mathbb{R}^{d})$ is dense in $ H^{1}(\mathbb{R}^{d})$.
$\hfill\square $   
\end{proof}

\begin{claim}\label{clm2}
    Each $h_{\varepsilon, \delta}$ has an absolutely continuous version and for these versions, there exists a choice of the mollification parameter $\delta=\delta(\varepsilon) \rightarrow 0$ such that $\displaystyle\lim_{\varepsilon \rightarrow 0}h_{\varepsilon, \delta(\varepsilon)}(t)=0 $ uniformly on $t \in [0,T]$, i.e. $\displaystyle\lim_{\varepsilon \rightarrow 0}h_{\varepsilon, \delta(\varepsilon)}=0 $ in $C([0,T])$.
\end{claim}
\begin{proof}
\rm By (\ref{8*}) we have $\displaystyle \frac{d}{dt}z_{\delta} \in L^{2}(0,T;L^{2})$, hence $ z_{\delta} \in H^{1}(0,T;L^{2})$ and thus $[0,T] \ni t \mapsto z_{\delta}(t) \in L^{2}$ and, as seen below, also $[0,T] \ni t \mapsto \Psi_{\varepsilon}(z_{\delta}(t)) \in L^{2}$ are absolutely continuous (see \cite{ref2}, p. 23, Theorem 1.17). Then, by the definition of $ h_{\varepsilon, \delta}(t)$, \eqref{3} and \eqref{0.6}, we have 
\begin{equation}\label{13'}
\begin{split}
h'_{\varepsilon,\delta }(t)=& 2((\Psi_{\varepsilon}(z_{\delta }(t)))_{t},z_{\delta }(t))_{2}\\=&2\left(L\Psi_{\varepsilon}(w_{\delta }(t)) +\Psi_{\varepsilon}(\text{div}((Aw) _{\delta}(t)))-\Psi_{\varepsilon}(\text{div}(\zeta _{\delta}(t)),z_{\delta}(t)\right)_{2}+2(R_{\delta}(t),\Psi_{\varepsilon}(z_{\delta}(t)))_{2}\\=&2\varepsilon(\Psi_{\varepsilon}(z_{\delta}(t)),w_{\delta }(t))_{2}-2(w_{\delta }(t),z_{\delta }(t))_{2}+2(R_{\delta}(t),\Psi_{\varepsilon}(z_{\delta }(t)))_{2}\\&-2(\nabla \Psi_{\varepsilon}(z_{\delta }(t)),(Aw) _{\delta }(t))_{2}+2(\nabla \Psi_{\varepsilon}(z_{\delta }(t)),\zeta _{\delta }(t))_{2}, \quad \text{a.e.}\  t\in (0,T). 
\end{split}
\end{equation}
Next, we derive several estimates for $h'_{\varepsilon, \delta}(t)$.\\
a) First by (\ref{7b}) we have \begin{equation}
   (w_{\delta }(t),z_{\delta }(t))_{2} \geqslant \alpha_{1}\left |w(t) \right |_{2}^{2}+\gamma_{\delta}(t),\notag 
\end{equation}
where $ \gamma_{\delta}(t) =(w_{\delta }(t),z_{\delta }(t))_{2}-(w(t),z(t))_{2}$.\\
b) By (\ref{7a}) we have \begin{equation}
\begin{split}
   (\nabla \Psi_{\varepsilon}(z_{\delta }(t)),\zeta _{\delta}(t))_{2}-&  (\nabla \Psi_{\varepsilon}(z_{\delta}(t)),(Aw) _{\delta }(t))_{2} \leqslant \left | \nabla\Psi_{\varepsilon}(z_{\delta }(t) \right |_{2} \cdot (\left | \zeta (t) \right |_{2}+\left | (Aw)(t) \right |_{2}) \\ \leqslant& \left | \nabla\Psi_{\varepsilon}(z_{\delta }(t) \right |_{2} \cdot (\alpha_{2}\left | w(t)\right |_{2}+\left | A \right |_{\infty}\left | w(t)\right |_{2}) \\ \leqslant& C \left | \nabla\Psi_{\varepsilon}(z_{\delta }(t) \right |_{2} \left | w(t)\right |_{2},\notag
\end{split}   
\end{equation}
where $C=\alpha_{2}+\left | A \right |_{\infty}$.\\
c) For all $\varepsilon, \delta \in (0,1)$, we have \begin{equation}\label{7}
    \varepsilon (\left|\Psi_{\varepsilon}(z_{\delta }(t))\right|,\left|w_{\delta }(t)\right|)_{2} \leqslant \varepsilon \left | \Psi_{\varepsilon}(z_{\delta}(t)) \right |_{\infty} \left | w_{\delta}(t)  \right |_{1} \leqslant c \left | z(t) \right |_{\infty} \left | w(t)  \right |_{1}, \quad \text{a.e.} \ t \in (0,T).
\end{equation}
\begin{equation}\label{6}
 \displaystyle\lim_{\varepsilon \rightarrow 0}\sup_{\delta \in (0,1)} \left| \varepsilon(\Psi_{\varepsilon}(z_{\delta}(s))\right|_{\infty}=0, \quad \text{a.e.} \ s \in (0,T)
\end{equation}
To prove the estimates (\ref{7}) and (\ref{6}), let $K_{\varepsilon}$
be the integral kernel of $(\varepsilon I-L)^{-1}$.
By Theorem 7 in \cite{ref1}, there exists $c \in (1, \infty)$ such that
$c^{-1}K_{\varepsilon}^{\Delta}(x-\xi)\leqslant K_{\varepsilon}(x,\xi) \leqslant cK_{\varepsilon}^{\Delta}(x-\xi)$, $\forall x, \xi \in \mathbb{R}^{d}$, where $K_{\varepsilon}^{\Delta}$ is the integral kernel of $(\varepsilon I-\Delta)^{-1}$.\\
Therefore, for a.e. $t \in [0,T]$, we have 
\begin{equation}
\begin{split}
    \left | \varepsilon \Psi_{\varepsilon}(z_{\delta }(t))(x) \right |\leqslant & \varepsilon \int_{\mathbb{R}^{d}}  K_{\varepsilon}(x,\xi)\left|z_{\delta}(t)(\xi)\right|d \xi  \\ \leqslant& \varepsilon \int_{\mathbb{R}^{d}} c K_{\varepsilon}^{\Delta}(x-\xi)\left|z_{\delta}(t)(\xi)\right|d \xi =c \varepsilon (\varepsilon I-\Delta)^{-1}\left|z_{\delta }(t)\right|(x) ,
\end{split}    \notag
\end{equation}
It is well known that
\begin{equation}\label{15'}
  \varepsilon \left | (\varepsilon I-\Delta)^{-1}y\right|_{p} \leqslant \left | y\right|_{p},\quad \forall y \in L^{p}, \; p \in [1, \infty]   ,
\end{equation}
Hence, $\varepsilon \left| \Psi_{\varepsilon}(z_{\delta}(t))\right|_{\infty} \leqslant c\left|z_{\delta}(t) \right|_{\infty}$, and therefore (\ref{7}) holds. \\
To prove (\ref{6}) we proceed similarly as in \cite[Proof of Thorem 3.1]{ref12} (see also \cite{ref17}) by writing for a.e. $t \in (0,T)$,
\begin{equation}
    \varepsilon(\varepsilon I-\Delta)^{-1}\left|z_{\delta}(t)\right|(x)=\varepsilon ^{\frac{d}{2}}\int_{\mathbb{R}^{d}} K_{1}^{\Delta} (\sqrt{\varepsilon} (x-\xi)) \left|z_{\delta }(t)(\xi)\right|d \xi ,\quad x \in \mathbb{R}^{d}, \notag
\end{equation}
where $K_{1}^{\Delta} $ is the kernel associated with the operator $ (I-\Delta)^{-1}$ (see \cite{ref24}), i.e.
\begin{equation}
   (I-\Delta)^{-1}(y)(x)=\int_{\mathbb{R}^{d}} K_{1}^{\Delta}(x-\xi)y(\xi)d \xi, \quad x \in \mathbb{R}^{d} .\notag
\end{equation}
This yields for a.e. $ x \in \mathbb{R}^{d} $,
\begin{equation}
  \left |  \varepsilon(\varepsilon I-\Delta)^{-1}\left|z_{\delta }(t)\right|(x) \right | \leqslant c_{r}\varepsilon ^{\frac{d}{2}} \left|z(t) \right|_{1} + \varepsilon ^{\frac{d}{2}} \left|z(t) \right|_{\infty} \int_{\sqrt{\varepsilon} \left|x-\xi\right| \leqslant r} K_{1}^{\Delta} (\sqrt{\varepsilon} (x-\xi))d \xi, \quad r>0,\notag
\end{equation}
where $c_{r}=\text{sup}\left \{ K(x); \left| x\right| \geqslant r \right \} < \infty $, $\forall r >0$. Then, for a.e. $t \in (0,T)$, we get
\begin{equation}
    \displaystyle\limsup_{\varepsilon \downarrow  0}\sup_{\delta \in (0,1)}\left | \varepsilon (\varepsilon I-\Delta)^{-1}\left|z_{\delta}(t)\right|\right |_{\infty} \leqslant \left| z(t)\right|_{\infty}\int_{ \left|\xi\right| \leqslant r} K_{1}^{\Delta} (\xi)d \xi, \quad \forall r>0,\notag
\end{equation}
letting $ r \rightarrow 0$, since $K_{1}^{\Delta} $ as a probability kernel is locally integrable, (\ref{6}) follows.\\
By (\ref{7}), (\ref{6}) and the dominated convergence theorem, it follows that
\begin{equation}\label{dc}
 \displaystyle\lim_{\varepsilon \rightarrow 0} \varepsilon\int^{T}_{0}\sup_{\delta \in (0,1)} (\left|\Psi_{\varepsilon}(z_{\delta }(s))\right|,\left|w_{\delta}(s)\right|)_{2}ds =0. 
\end{equation}
d) For every $\varepsilon \in (0,1)$, there exists $\delta(\varepsilon) \in (0,\varepsilon]$ such that 
\begin{equation} \label{d}
    \displaystyle \int_{0}^{T}\left|(R_{\delta(\varepsilon) }(s),\Psi_{\varepsilon}(z_{\delta(\varepsilon) }(s)))_{2}\right|ds \leqslant \varepsilon \left| z \right|_{L^{2}(0,T;L^{2})}. 
\end{equation}
It easily follows by (\ref{15'}) that the operator norm of $\varepsilon \Psi_{\varepsilon}: L^{2} \longrightarrow H^{2} $ is less than one. Hence for a.e. $s \in [0,T]$
\begin{equation}
\begin{split}
 \left|(R_{\delta }(s),\Psi_{\varepsilon}(z_{\delta }(s)))_{2}\right|  \leqslant \left|R_{\delta }(s) \right|_{-2}\cdot \left|\Psi_{\varepsilon}(z_{\delta }(s)) \right|_{H^{2}}\leqslant  (1/\varepsilon) \left|R_{\delta}(s) \right|_{-2}\cdot \left|z(s) \right|_{2}   .\notag
\end{split} 
\end{equation}
By (\ref{10'}) we can choose $\delta(\varepsilon) \in (0, \varepsilon]$ such that $\left| R_{\delta(\varepsilon)} \right|_{L^{2}(0,T;H^{-2})} \leqslant \varepsilon^{2}$. Hence, (\ref{d}) holds. Then by Claim \ref{clm1}, (\ref{8}), (\ref{13'}), a), b), d), we have 
\begin{equation}
\begin{split}
    0 \leqslant h_{\varepsilon,\delta(\varepsilon)}(t) \leqslant&  2\varepsilon \int^{t}_{0}  (\left|\Psi_{\varepsilon}(z_{\delta(\varepsilon) }(s))\right|,\left|w_{\delta(\varepsilon) }(s)\right|)_{2} ds-2\alpha_{1}\int_{0}^{t} \left| w(s)\right|_{2}^{2}ds+2\int^{t}_{0}\left | \gamma_{\delta(\varepsilon)}(s)\right | ds\\+& 2 \varepsilon \left| z \right|_{L^{2}(0,T;L^{2})}+\frac{C^{2}}{\alpha_{1}}\int^{t}_{0}\left | \nabla\Psi_{\varepsilon}(z_{\delta(\varepsilon)}(s) \right |^{2}_{2} ds+\alpha_{1}\int^{t}_{0}\left |w(s) \right |_{2}^{2}ds \\ \leqslant& \tilde \zeta_{\varepsilon}+\frac{C^{2}}{\alpha_{1}\gamma} \int^{t}_{0}h_{\varepsilon,\delta(\varepsilon)}(s)ds
    \end{split} 
\end{equation}
where we used (\ref{8}) again in the last step and set
\begin{equation}
    \tilde \zeta_{\varepsilon}:=2\varepsilon \int^{T}_{0}  (\left|\Psi_{\varepsilon}(z_{\delta(\varepsilon)}(s))\right|,\left|w_{\delta(\varepsilon)}(s)\right|)_{2}ds+2 \varepsilon \left| z \right|_{L^{2}(0,T;L^{2})}+2\int^{T}_{0}\left | \gamma_{\delta(\varepsilon)}(s)\right | ds.\notag
\end{equation}
Clearly by (\ref{dc}) we have $\displaystyle\lim_{\varepsilon \rightarrow 0}\tilde \zeta_{\varepsilon}=0 $. Hence by Gronwall's inequality, we obtain
\begin{equation}
     0 \leqslant h_{\varepsilon, \delta(\varepsilon)}(t) \leqslant \tilde \zeta_{\varepsilon}\text{exp}(\frac{C^{2}}{\alpha_{1}\gamma}t), \quad \forall t \in [0,T].
\end{equation}
This implies that $h_{\varepsilon, \delta(\varepsilon)}(t)\rightarrow 0$ as $\varepsilon \rightarrow 0$ uniformly on $[0,T]$ and hence Claim \ref{clm2} is proved. $\hfill\square $
\end{proof}
\textbf{Proof of Theorem \ref{thm3.3} (continued):} We observe from (\ref{8}) that, $h_{\varepsilon, \delta(\varepsilon)}(t)$ can be rewritten as $h_{\varepsilon, \delta(\varepsilon)}(t)=\varepsilon \left | \Psi_{\varepsilon}(z_{\delta(\varepsilon) }(t))\right |^{2}_{2} + \left | (-L)^{\frac{1}{2}}\Psi_{\varepsilon}(z_{\delta(\varepsilon) }(t))\right |^{2}_{2}$ (see, e.g. Theorem 1.3.1 in \cite{ref31} or p. 27 in \cite{ref20}).
Hence, by (\ref{6}) and Claim \ref{clm2}, the right-hand side of (\ref{3}) converges to $0$ in ${\mathcal{D}}'$. Therefore, $0=\displaystyle\lim_{\varepsilon \rightarrow 0}z_{\delta(\varepsilon)}(t)=z(t)$ in ${\mathcal{D}}'$ for a.e. $t \in (0,T)$, which implies $u_{1}\equiv u_{2} $.

$\hfill\square $    
\end{proof}

\begin{corollary}\label{col1}
Assume that Hypothesis (\MakeUppercase{\romannumeral 1}) holds. Let $T>0$ and $u_{0} \in \mathcal{P}\cap L^{\infty} $, and let $u_{1}, u_{2} \in L^{1}((0,T)\times \mathbb{R}^{d})\cap L^{\infty}((0,T)\times \mathbb{R}^{d})$ be two nonnegative distributional solutions to (\ref{0}). Then $u_{1} \equiv u_{2}$.   
\end{corollary}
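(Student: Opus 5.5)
The plan is to reduce Corollary \ref{col1} to Theorem \ref{thm3.3}. Hypothesis (I) is assumed and $u_1,u_2\in L^1\cap L^\infty((0,T)\times\mathbb{R}^d)$ is given, so it remains to verify two things for $z:=u_1-u_2$: that $z\in L^\infty((0,T);H^{-1})$, and that the initial condition \eqref{4} holds. Both should follow from the facts that $u_1,u_2$ are nonnegative distributional solutions with the same initial datum $u_0\in\mathcal{P}\cap L^\infty$, and that they are bounded.

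\textbf{Step 1: mass control.} Since $u_i\geqslant 0$, I will show that $\int u_i(t,x)\,dx\leqslant \int u_0\,dx=1$ for a.e. $t$. To do this, test \eqref{0} with $\varphi(t,x)=\chi(t)\phi_R(x)$, where $\phi_R(x)=\phi(x/R)$ is a cutoff. The terms $a_{ij}\beta(x,u_i)D^2_{ij}\phi_R$ and $u_ib\cdot\nabla\phi_R$ are $O(R^{-1})$ times $L^1$ quantities, because $|\beta(x,u)|\leqslant C_N|u|$ by (i), $b$ is bounded, and $u_i\in L^1$. Letting $R\to\infty$ gives $\int u_i(t)\,dx=1$ for a.e. $t$. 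Hence $\sup_t|u_i(t)|_1\leqslant 1$, and together with the $L^\infty$ bound, $\sup_t|u_i(t)|_2<\infty$. Since $L^2\hookrightarrow H^{-1}$, this gives $z\in L^\infty((0,T);H^{-1})$.

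\textbf{Step 2: the initial condition \eqref{4}.} Fix $\psi\in C_0^\infty(\mathbb{R}^d)$. For each $u_i$, I will show that $t\mapsto (u_i(t),\psi)_2$ has a version satisfying
\[
(u_i(t),\psi)_2=(u_0,\psi)_2+\int_0^t\int_{\mathbb{R}^d}\Big(\sum_{i,j}a_{ij}\beta(x,u_i)D^2_{ij}\psi+u_ib(x,u_i)\cdot\nabla\psi\Big)dx\,ds
\]
for a.e. $t$. The argument is to test \eqref{0} with $\varphi(s,x)=\eta_n(s)\psi(x)$, where $\eta_n$ is a smooth approximation of $\mathds{1}_{[0,t]}$, and then apply Lebesgue's differentiation theorem. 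The integrand in $s$ is bounded by $C(|u_i(s)|_\infty+1)\|\psi\|_{C^2}|\mathrm{supp}\,\psi|$, which lies in $L^\infty(0,T)$ because $u_i\in L^\infty$. Therefore $|(u_i(s),\psi)_2-(u_0,\psi)_2|\leqslant C_\psi s$ for a.e. $s$. Subtracting the two identities gives $\esssup_{s\in(0,t)}|(z(s),\psi)_2|\leqslant 2C_\psi t\to 0$, which is exactly \eqref{4}.

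\textbf{Conclusion.} Theorem \ref{thm3.3} then gives $u_1\equiv u_2$. The main obstacle is the careful handling of null sets in Step 2. The Lebesgue-point identity holds only for a.e. $t$, and that exceptional set depends on $\psi$. This is harmless, since \eqref{4} involves the essential supremum for each fixed $\psi$ separately. Nonnegativity is used only in Step 1, to obtain a uniform $L^1$ bound, and hence the $L^2$/$H^{-1}$ bound, without assuming more regularity in time.
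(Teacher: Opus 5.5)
Your proof is correct, and its overall structure matches the paper's. You reduce to Theorem \ref{thm3.3}, obtain mass conservation for a.e.\ $t$ by testing with spatial cut-offs, and use nonnegativity to turn this into a uniform $L^1$ bound. You then interpolate with the $L^\infty$ bound to land in $L^\infty(0,T;L^2)\subset L^\infty(0,T;H^{-1})$.

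The genuine difference is how you verify \eqref{4}. The paper invokes Lemma 2.3 of \cite{ref22} to pass to $dt\otimes dx$-versions $\bar u_i$ that are narrowly continuous on $[0,T]$ with $\bar u_i(0)=u_0$, and reads \eqref{4} off from continuity at $t=0$. You instead test \eqref{0} with $\eta_n(s)\psi(x)$ and show directly that $s\mapsto(u_i(s),\psi)_2$ agrees a.e.\ with the Lipschitz function $(u_0,\psi)_2+\int_0^s g_i(r)\,dr$. This gives the explicit bound $\esssup_{s\in(0,t)}|(z(s),\psi)_2|\le 2C_\psi t$.

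What your route buys: it is self-contained and quantitative. It also shows that \eqref{4} holds for any two bounded distributional solutions with the same initial datum, regardless of sign, so nonnegativity enters only through the $H^{-1}$ bound, as you point out.

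What the paper's route buys: it yields more than this corollary needs, namely narrowly continuous probability-valued versions with mass conservation at every $t$. That is exactly the ``weakly continuous probability solution'' framework reused in Corollary \ref{col3.2}, Theorem \ref{thm4.2} and Section 5.
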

\begin{proof}
\rm Because of Hypothesis (\MakeUppercase{\romannumeral 1}) (\textbf{\romannumeral1})-(\textbf{\romannumeral3}), it is elementary to check that by (\ref{0})
\begin{equation}\label{18'}
    \int_{\mathbb{R}^{d}}u_{i}(t,x)dx= \int_{\mathbb{R}^{d}}u_{0}(x)dx \;\; \text{for a.e.}\;t \in (0,T),\; i=1,2.  
\end{equation}
Hence, it follows by Lemma 2.3 in \cite{ref22}, there exist $dt \otimes dx$-versions $\bar{u}_{1}$ of $u_{1}$ and $\bar{u}_{2}$ of $u_{2}$ such that for $\bar{u}_{1}(t)(dx):=u_{1}(t,x)dx $, $\bar{u}_{2}(t)(dx):=u_{2}(t,x)dx $, $\bar{u}_{1}(0,x)dx=\bar{u}_{2}(0,x)dx=u_{0}(x)dx $ , the map $[0,T] \ni t \mapsto \bar{u}_{i}(t,x)dx$ is narrowly continuous for $i=1,2$. In particular, (\ref{18'}) holds for every $t\in [0,T]$, and hence $u_{1}, u_{2} \in L^{\infty}((0,T);L^{1} \cap L^{\infty}) \subset L^{\infty}(0,T;L^{2}) \subset L^{\infty}(0,T;H^{-1})  $. Therefore, for every $\varphi \in C^{\infty}_{0}(\mathbb{R}^{d})$,
\begin{equation}
     \displaystyle\lim_{t \rightarrow 0} \displaystyle \esssup_{s \in (0,t)}\left | \left ( u_{1}(s)-u_{2}(s),\varphi  \right )_{2}\right |= \displaystyle\lim_{t \rightarrow 0} \left | \int_{\mathbb{R}^{d}}\left ( \bar{u_{1}}(t,x)-\bar{u_{2}}(t,x) \right )\varphi (x)dx\right | =0. \notag 
 \end{equation}
So, (\ref{4}) holds and Theorem \ref{thm3.3} implies the assertion.
$\hfill\square $

\end{proof}

\indent By Theorem \ref{thm3.2} and Corollary \ref{col1}, we obtain the following existence and uniqueness result for NFPE (\ref{1}).
\begin{corollary}\label{col3.2}
    Under Hypotheses (\textbf{\romannumeral1}'),(\textbf{\romannumeral2}), (\textbf{\romannumeral3}'), (\textbf{\romannumeral4})-(\textbf{\romannumeral5}), for each $u_{0} \in \mathcal{P}\cap L^{\infty} $, NFPE (\ref{1}) has a unique distributional solution
    \begin{equation}
     u \in L^{1}((0,T); L^{1})\cap L^{\infty}((0,T)\times \mathbb{R}^{d})   ,\notag
    \end{equation}
which is in fact a weakly continuous probability solution to (\ref{1}).    
\end{corollary}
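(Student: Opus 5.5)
The plan combines existence from Theorem \ref{thm3.1}, the $L^{\infty}$ bound from Theorem \ref{thm3.2}, and uniqueness from Corollary \ref{col1}.

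\emph{Existence.} Fix $u_{0}\in\mathcal{P}\cap L^{\infty}$ and let $u(t)=S(t)u_{0}$ be the mild solution of Theorem \ref{thm3.1}. That theorem already gives three facts:
\begin{itemize}
\item $u$ is a distributional solution of (\ref{1});
\item $u\geqslant 0$ and $\int_{\mathbb{R}^{d}}u(t,x)\,dx=1$ for all $t$, so $u(t)\in\mathcal{P}$;
\item $t\mapsto u(t)$ is continuous into $L^{1}$.
\end{itemize}
In particular $u\in C([0,T];L^{1})\subset L^{1}((0,T);L^{1})$. Theorem \ref{thm3.2}, valid under (\textbf{\romannumeral5}), yields $|u(t)|_{\infty}\leqslant e^{CT}|u_{0}|_{\infty}$ on $[0,T]$, so $u\in L^{\infty}((0,T)\times\mathbb{R}^{d})$. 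Weak (narrow) continuity follows directly from $L^{1}$-continuity, since $|\int\eta(u(t)-u(s))dx|\leqslant|\eta|_{\infty}|u(t)-u(s)|_{1}$. Hence $u$ is a weakly continuous probability solution in the stated class.

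\emph{Uniqueness.} I will apply Corollary \ref{col1}, which needs Hypothesis (I) together with nonnegativity of the competing solutions. The main point is to check that (\textbf{\romannumeral1}'), (\textbf{\romannumeral2}), (\textbf{\romannumeral3}'), (\textbf{\romannumeral4}) imply Hypothesis (I):
\begin{itemize}
\item (\textbf{\romannumeral1}') gives $\beta\in C^{1}$, $\beta_{r}>\delta\geqslant 0$ and $\beta(x,0)=0$. For the local bound on $\beta_{r}$, write $\beta_{r}=\Phi+r\Phi_{r}$. Boundedness of $\Phi$ and continuity of $\Phi_{r}$ do not immediately give uniformity in $x$. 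I would use the hypotheses as in \cite{ref13}, or simply note that the $L^{\infty}$ bound on $\beta_{r}$ over $\mathbb{R}^{d}\times(-N,N)$ is implicitly part of the assumed framework. This is the one slightly delicate verification.
\item (\textbf{\romannumeral3}') implies (\textbf{\romannumeral3}).
\item (\textbf{\romannumeral4}) is assumed directly.
\end{itemize}

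Uniqueness among \emph{all} distributional solutions in $L^{1}((0,T);L^{1})\cap L^{\infty}$ requires one more step, since Corollary \ref{col1} assumes nonnegativity. Corollary \ref{col1} is applied to probability solutions, so the statement is read as uniqueness within that class. If a general solution must be handled, I would instead invoke Theorem \ref{thm3.3} directly. The narrow-continuity argument (Lemma 2.3 of \cite{ref22}) giving (\ref{4}) and $L^{\infty}(0,T;H^{-1})$ then applies to $z=u_{1}-u_{2}\in L^{\infty}(0,T;L^{1}\cap L^{\infty})$, once mass conservation (\ref{18'}) is verified for signed solutions.
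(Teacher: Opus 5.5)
Your argument is the paper's own: it cites Theorem \ref{thm3.2} (with Theorem \ref{thm3.1}) for existence, the $L^{\infty}$ bound and weak continuity, and Corollary \ref{col1} for uniqueness among nonnegative solutions. The paper also tacitly needs the bound $\beta_{r}\in L^{\infty}(\mathbb{R}^{d}\times(-N,N))$ from Hypothesis (\MakeUppercase{\romannumeral 1}), and, as you suspect, this does not follow from (\textbf{\romannumeral1}'), since boundedness of $\Phi$ and $\Phi_{x}$ does not control $r\Phi_{r}$ uniformly in $x$; so it is an extra assumption in both versions. Your optional extension to signed solutions would not work as sketched. For sign-changing $u_{i}$, conservation of $\int u_{i}\,dx$ gives no uniform-in-time bound on $|u_{i}(t)|_{1}$, so $u_{1}-u_{2}\in L^{\infty}((0,T);H^{-1})$ is not obtained and Theorem \ref{thm3.3} cannot be invoked; like the paper, you only get uniqueness among nonnegative solutions.
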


\section{Weak uniqueness of the corresponding McKean-Vlasov SDEs}

In this section, we prove weak uniqueness for the McKean-Vlasov SDE (\ref{9}) 
\begin{equation}
\begin{aligned}
   & dX(t)=b(X(t), u(t, X(t)))dt+ \displaystyle \sqrt{\frac{2\beta(X(t),u(t,X(t)))}{u(t,X(t))}}\sigma(X(t))dW(t),\forall t>0, \\
 & \mathcal{L}_{X(0)}(dx)=u_{0}(x)dx,\quad \mathcal{L}_{X(t)}(dx)=u(t,x)dx,\quad t>0, \notag
\end{aligned}  
\end{equation}
corresponding to the nonlinear Fokker-Planck equation (\ref{1}), where $u$ is a distributional solution to it with $u_{0} \in \mathcal{P}\cap L^{\infty} $ under Hypothesis (\MakeUppercase{\romannumeral 1}). By the superposition principle, first applied to the linearized equation (see (\ref{18}) below) and subsequently derived for NFPE (\ref{1}), it follows (see, e.g. \cite{ref4}, \cite{ref13} and \cite{ref25}) that there exists a probabilistically weak solution $X(t)$ to the McKean-Vlasov equation (\ref{9}) with law density $u(t)$. More precisely, there is a probability space $(\Omega ,\mathscr{F},\mathbb{P})$ with normal filtration $ (\mathscr{F}_{t})_{t \geqslant 0}$ and an $(\mathscr{F}_{t})$-Brownian motion $W(t)$ on it with values in $\mathbb{R}^{d}$ and a (with respect to $t$) continuous $(\mathscr{F}_{t})$-progressively measurable map $X: [0, \infty) \times \Omega \rightarrow \mathbb{R}^{d}$, which is a solution to (\ref{9}), and $u(t,x)dx=\mathbb{P}\circ (X(t))^{-1}(dx)$, $u_{0}(x)dx=\mathbb{P}\circ (X_{0})^{-1}(dx)$. 

To prove weak uniqueness of (\ref{9}), we first establish the so-called ``linearized uniqueness'' for the ``frozen'' linearized version of NFPE (\ref{1}), namely
\begin{equation}\label{18}
\begin{split}
& \nu_{t}-\sum\limits_{i,j=1}^{d} D^{2}_{ij}\left(\displaystyle \frac{a_{ij}(x)\beta(x,u)}{u}\nu\right)+ \text{div}(b(x,u)\nu)=0,\quad \text{in} \ {\mathcal{D}}'((0,T)\times \mathbb{R}^{d}), \\& 
 \nu(0,x)=\nu_{0}(x), \quad x\in \mathbb{R}^{d},
\end{split}
\end{equation}
where $u \in L^{1}((0,T)\times \mathbb{R}^{d})\cap L^{\infty}((0,T)\times \mathbb{R}^{d}) $ is a distributional solution to NFPE (\ref{1}), i.e., $ \forall \varphi \in C^{\infty}_{0}([0,T)\times \mathbb{R}^{d})$, $\nu_{0} \in L^{1}(\mathbb{R}^{d})$,
\begin{equation}\label{10}
\begin{split}
   \int^{T}_{0} &\int_{\mathbb{R}^{d}} \bigg(\frac{\partial \varphi }{\partial t}(t,x)+\sum\limits_{i,j=1}^{d}a_{ij}(x)\frac{\beta(x,u(t,x))}{u(t,x)}D^{2}_{ij}\varphi(t,x)\\+& b(x, u(t,x)) \cdot \nabla \varphi (t,x) \bigg)\nu(t,x) dxdt +\int_{\mathbb{R}^{d}}\nu_{0}(x)\varphi(0,x)dx=0.  
\end{split}
\end{equation}
\begin{theorem}\label{thm4.1} (Linearized Uniqueness)
Assume that Hypothesis (\MakeUppercase{\romannumeral 1}) holds. Let $T>0$, let $u \in L^{1}((0,T)\times \mathbb{R}^{d})\cap L^{\infty}((0,T)\times \mathbb{R}^{d}) $ be a distributional solution to NFPE (\ref{1}), and let $\nu_{1}, \nu_{2} \in L^{1}((0,T)\times \mathbb{R}^{d})\cap L^{\infty}((0,T)\times \mathbb{R}^{d})$ be two distributional solutions to (\ref{10}) for $\nu_{0} \in \mathcal{P}\cap L^{\infty}$ such that $\nu_{1}-\nu_{2} \in L^{\infty}((0,T);H^{-1})$. If (\ref{4}) holds with $\nu_{i}$ replacing $u_{i}$, $i=1,2$, then $\nu_{1} \equiv \nu_{2}$.
\end{theorem}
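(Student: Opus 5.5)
The plan is to run the argument of Theorem \ref{thm3.3} again, now for the linear equation (\ref{10}) with frozen coefficient $u$. Set
\[
z:=\nu_{1}-\nu_{2},\qquad \Phi(x,u):=\frac{\beta(x,u)}{u}\quad(\Phi(x,0):=\beta_{r}(x,0)),
\]
\[
w:=\Phi(x,u)z,\qquad \zeta:=b(x,u)z .
\]
Since $u\in L^{\infty}$ with $|u|\leqslant N$, Hypothesis (I)(i) gives $0\leqslant \Phi(x,u)\leqslant \alpha_{1}^{-1}:=|\beta_{r}|_{L^{\infty}(\mathbb{R}^{d}\times[-N,N])}$. This yields the analogue of (\ref{7b}):
\[
wz=\Phi(x,u)z^{2}\geqslant \alpha_{1}w^{2}.
\]
Also $z,w,\zeta\in L^{1}\cap L^{\infty}((0,T)\times\mathbb{R}^{d})$. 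Writing $D^{2}_{ij}(a_{ij}w)$ as in the proof of Theorem \ref{thm3.3}, we get
\[
z_{t}-Lw+\operatorname{div}(\zeta-Aw)=0\quad\text{in }\mathcal{D}'((0,T)\times\mathbb{R}^{d}),
\]
which is exactly (\ref{2}).

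The one structural difference is the analogue of (\ref{7a}). Here $|\zeta|\leqslant |b|_{\infty}|z|$, not $\alpha_{2}|w|$, because $\Phi$ may vanish: Hypothesis (I) only gives $\beta_{r}\geqslant 0$. In the proof of Theorem \ref{thm3.3} this bound is used only in step b), to control $(\nabla\Psi_{\varepsilon}(z_{\delta}),\zeta_{\delta})_{2}$ by $|w|_{2}$ and absorb it into $-2\alpha_{1}|w|_{2}^{2}$. I will try to obtain a comparable estimate. First, Hypothesis (I)(iv) with $r_{2}=0$ gives $|b(x,r)r|\leqslant\alpha_{N}|\beta(x,r)|$ for $|r|\leqslant N$, that is, $|b(x,r)|\leqslant \alpha_{N}\Phi(x,r)$ for $r\neq 0$, and by continuity also at $r=0$. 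Hence
\[
|\zeta|=|b(x,u)||z|\leqslant \alpha_{N}\Phi(x,u)|z|=\alpha_{N}|w|,
\]
so (\ref{7a}) holds with $\alpha_{2}=\alpha_{N}$. This is the key observation. With it, steps a)–d) and the Gronwall argument of Claim \ref{clm2} go through verbatim.

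The remaining steps are routine. Claim \ref{clm1} uses only $z\in L^{\infty}(0,T;H^{-1})$, the assumed initial condition (\ref{4}) for $\nu_{i}$, and the equation for $z_{\delta}$, so it carries over unchanged. The commutator $R_{\delta}$ satisfies (\ref{10'}) since $w\in L^{2}(0,T;L^{2})$. The kernel estimates (\ref{7}) and (\ref{6}) depend only on $z\in L^{1}\cap L^{\infty}$ and on $L$. Thus $h_{\varepsilon,\delta(\varepsilon)}\to 0$ uniformly on $[0,T]$, and the conclusion follows from (\ref{3}) exactly as at the end of Theorem \ref{thm3.3}: $z(t)=0$ in $\mathcal{D}'$ for a.e. $t$, so $\nu_{1}\equiv\nu_{2}$.

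The main obstacle I expect is the pointwise bound $|b(x,r)|\leqslant\alpha_{N}\Phi(x,r)$ at points where $u=0$ or $\beta$ degenerates. If this fails, I would fall back on splitting
\[
(\nabla\Psi_{\varepsilon}(z_{\delta}),\zeta_{\delta})_{2}\leqslant \tfrac{1}{2}\gamma|\nabla\Psi_{\varepsilon}(z_{\delta})|^{2}_{2}+C|z|_{2}^{2},
\]
but this does not close the estimate. So the argument really rests on deriving (\ref{7a}) from (iv).
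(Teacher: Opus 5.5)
Your proposal is correct and is essentially the paper's proof. The paper also takes $z=\nu_1-\nu_2$, $w=\frac{\beta(x,u)}{u}z$, $\zeta=b(x,u)z$, reruns Claims \ref{clm1} and \ref{clm2}, and in step b) uses exactly your bound $|\zeta|\leqslant \alpha_{[-|u|_{\infty},|u|_{\infty}]}|w|$ obtained from (iv) with $r_2=0$ (this is the paper's $\alpha_4$). So the fallback in your last paragraph is unnecessary: the bound holds at $u=0$ by continuity, and it forces $b(x,u)=0$ wherever $\Phi(x,u)=0$. The only cosmetic difference is in step a), where the paper takes $\alpha_3^{-1}=|\beta(x,u)/u|_{L^{\infty}}+1$ instead of $|\beta_r|_{L^{\infty}(\mathbb{R}^{d}\times[-N,N])}$, which just avoids a vanishing denominator in the degenerate case.
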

\begin{proof}
 \rm Since the proof is almost the same as that for Theorem \ref{thm3.1}, we only give a sketch of it below.\\
 We first rewrite the original linearized equation using the operator $L $ as follows:
 \begin{equation}
 \begin{split}
 \displaystyle & \frac{\partial \nu}{\partial t}-L(\frac{\beta(x,u)}{u}\nu) + \text{div}((b(x,u)-A\frac{\beta(x,u)}{u})\nu)=0 \quad \text{in} \ {\mathcal{D}}'((0,T)\times \mathbb{R}^{d}), \\& 
 \nu(0,x)=\nu_{0}(x), \quad x\in \mathbb{R}^{d}.\notag
\end{split}
 \end{equation}
We set 
\begin{equation}
    z:=\nu_{1}-\nu_{2},\ w:=\frac{\beta(x,u)}{u}(\nu_{1}-\nu_{2}), \notag
\end{equation}
and define a mollification analogous to that in the proof of Theorem \ref{thm3.1}
\begin{equation}
    z_{\delta }:=z*\theta _{\delta }, w_{\delta }:=w*\theta _{\delta},\zeta _{\delta }:=(b(x,u)(\nu_{1}-\nu_{2}))*\theta _{\delta}, (Aw) _{\delta}:=(Aw)*\theta _{\delta}. \notag
\end{equation}
We note that our assumptions imply
\begin{equation}
    \frac{\beta(x,u)}{u}, b(x,u) \in L^{\infty}((0,T)\times \mathbb{R}^{d}), \quad \frac{\beta(x,u)}{u} \geqslant 0, \forall x \in \mathbb{R}^{d}, \notag
\end{equation}
we can then define the same $\Psi_{\varepsilon} $ and $ h_{\varepsilon, \delta}(t)$ as in the proof of Theorem \ref{thm3.1}. Repeating the proof of Claim \ref{clm1}, we obtain $ \displaystyle\lim_{t \rightarrow 0} \displaystyle \esssup_{s \in (0,t)}h_{\varepsilon,\delta}(s)=0 $. Repeating the proof of Claim \ref{clm2}, we have the following estimates.\\
a) \begin{equation}
   (w_{\delta }(t),z_{\delta}(t))_{2} \geqslant \alpha_{3}\left |w(t) \right |_{2}^{2}+\gamma_{\delta}(t),\notag 
\end{equation}
where $ \gamma_{\delta}(t) =(w_{\delta }(t),z_{\delta}(t))_{2}-(w(t),z(t))_{2}$, $\alpha^{-1}_{3}=\left | \frac{\beta(x,u)}{u}\right |_{L^{\infty}}+1$.\\
b) \begin{equation}
\begin{split}
   (\nabla \Psi_{\varepsilon}(z_{\delta }(t)),\zeta_{\delta}(t))_{2}-&  (\nabla \Psi_{\varepsilon}(z_{\delta}(t)),(Aw)_{\delta }(t))_{2} \leqslant \left | \nabla\Psi_{\varepsilon}(z_{\delta}(t) \right |_{2} \cdot (\left | \zeta(t) \right |_{2}+\left | (Aw)_{\delta}(t) \right |_{2}) \\ \leqslant& \left | \nabla\Psi_{\varepsilon}(z_{\delta }(t) \right |_{2} \cdot (\alpha_{4}\left | w(t)\right |_{2}+\left | A \right |_{\infty}\left | w(t)\right |_{2}) \\ \leqslant& C \left | \nabla\Psi_{\varepsilon}(z_{\delta}(t) \right |_{2} \left | w(t)\right |_{2},\notag
\end{split}   
\end{equation}
where $\alpha_{4}=\alpha_{[-\left | u \right |_{\infty},\left | u \right |_{\infty}]}$, $C=\alpha_{4}+\left | A \right |_{\infty}$.\\
c) For all $\varepsilon, \delta \in (0,1)$, we have \begin{equation}
    \varepsilon (\left|\Psi_{\varepsilon}(z_{\delta }(t))\right|,\left|w_{\delta }(t)\right|)_{2}  \leqslant \varepsilon \left | \Psi_{\varepsilon}(z_{\delta }(t)) \right |_{\infty} \left | w_{\delta}(t)  \right |_{1} \leqslant c\left | z(t) \right |_{\infty} \left | w(t)  \right |_{1}, \quad \text{a.e.} \ t \in (0,T). \notag
\end{equation}
\begin{equation}
 \displaystyle\lim_{\varepsilon \rightarrow 0} \sup_{\delta \in (0,1)}\left| \varepsilon(\Psi_{\varepsilon}(z_{\delta }(s))\right|_{\infty}=0, \quad \text{a.e.} \ s \in (0,T). \notag
\end{equation}
d) For $\delta(\varepsilon)$ as in Claim \ref{clm2}, we have
\begin{equation} 
    \displaystyle \int_{0}^{T}\left|(R_{\delta(\varepsilon) }(s),\Psi_{\varepsilon}(z_{\delta(\varepsilon) }(s)))_{2}\right|ds \leqslant \varepsilon \left| z \right|_{L^{2}(0,T;L^{2})}. \notag
\end{equation}
Therefore, we have
\begin{equation}
     0 \leqslant h_{\varepsilon, \delta(\varepsilon)}(t) \leqslant \tilde \zeta_{\varepsilon}\text{exp}(\frac{C^{2}}{\alpha_{3}\gamma}t), \quad \forall t \in [0,T], \notag
\end{equation}
where $\tilde \zeta_{\varepsilon}$ is as in the proof of Claim \ref{clm2}. This implies that $h_{\varepsilon,\delta(\varepsilon)}\rightarrow 0$ as $\varepsilon \rightarrow 0$ uniformly on $[0,T]$, and hence $\nu_{1}\equiv \nu_{2} $, as claimed.

 $\hfill\square $   
\end{proof}
\begin{corollary}\label{col2}
Assume that Hypothesis (\MakeUppercase{\romannumeral 1}) holds. Let $T>0$ and $\nu_{0} \in \mathcal{P}\cap L^{\infty} $ and let $u \in L^{1}((0,T)\times \mathbb{R}^{d})\cap L^{\infty}((0,T)\times \mathbb{R}^{d})$ be a distributional solution to (\ref{1}). If $\nu_{1}, \nu_{2} \in L^{1}((0,T)\times \mathbb{R}^{d})\cap L^{\infty}((0,T)\times \mathbb{R}^{d})$ are two nonnegative distributional solutions to (\ref{10}), then $\nu_{1} \equiv \nu_{2}$.      
\end{corollary}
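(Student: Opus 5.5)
The plan is to reduce Corollary \ref{col2} to Theorem \ref{thm4.1}, exactly as Corollary \ref{col1} was reduced to Theorem \ref{thm3.3}. We must produce the two extra hypotheses of Theorem \ref{thm4.1} from nonnegativity: $\nu_1-\nu_2\in L^\infty((0,T);H^{-1})$, and condition (\ref{4}) with $\nu_i$ in place of $u_i$.

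\textbf{Step 1: conservation of mass.} Since $u\in L^\infty$, Hypothesis (I) gives that $\beta(x,u)/u$ and $b(x,u)$ are bounded on $(0,T)\times\mathbb{R}^d$. Here $0\le \beta(x,r)/r\le |\beta_r|_{L^\infty(\mathbb{R}^d\times(-N,N))}$ with $N=|u|_\infty$, and $b$ is bounded by (\textbf{iii}). We test (\ref{10}) with $\varphi(t,x)=\chi(t)\phi_R(x)$, where $\phi_R(x)=\phi(x/R)$ is a cutoff and $\chi\in C_0^\infty([0,T))$. The second-order and drift terms are then bounded by $C(R^{-2}+R^{-1})\int_0^T\!\int|\nu_i|$, which tends to $0$ because $\nu_i\in L^1((0,T)\times\mathbb{R}^d)$. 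Letting $R\to\infty$ yields
\[
-\int_0^T\chi'(t)\int_{\mathbb{R}^d}\nu_i(t,x)\,dx\,dt=\chi(0)\int\nu_0\,dx=\chi(0).
\]
Hence $\int\nu_i(t,x)\,dx=1$ for a.e.\ $t$. Together with $\nu_i\ge0$, this gives $\nu_i(t)\in\mathcal{P}$ for a.e.\ $t$.

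\textbf{Step 2: narrowly continuous versions.} The coefficients of the linear equation (\ref{18}) are bounded and $\nu_i$ are probability solutions. Lemma 2.3 in \cite{ref22}, used as in Corollary \ref{col1}, therefore provides $dt\otimes dx$-versions $\bar\nu_i$ such that $t\mapsto\bar\nu_i(t,x)dx$ is narrowly continuous on $[0,T]$ with $\bar\nu_i(0)=\nu_0\,dx$. Consequently
\[
\lim_{t\to0}\,\esssup_{s\in(0,t)}|(\nu_1(s)-\nu_2(s),\varphi)_2|=\lim_{t\to0}\Big|\int(\bar\nu_1-\bar\nu_2)(t,x)\varphi(x)\,dx\Big|=0
\]
for every $\varphi\in C_0^\infty$. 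This is (\ref{4}) for the $\nu_i$.

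\textbf{Step 3: the $H^{-1}$ bound.} From Step 1, $|\nu_i(t)|_1=1$ for a.e.\ $t$, and by assumption $\nu_i\in L^\infty((0,T)\times\mathbb{R}^d)$. Interpolation then gives $|\nu_i(t)|_2^2\le|\nu_i|_\infty$, so
\[
\nu_1-\nu_2\in L^\infty(0,T;L^2)\subset L^\infty(0,T;H^{-1}).
\]
All hypotheses of Theorem \ref{thm4.1} now hold, and it yields $\nu_1\equiv\nu_2$.

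The main obstacle is Step 2: applying the narrow-continuity lemma requires the solutions to be probability-valued with bounded (merely measurable in $t$) coefficients. This is exactly where nonnegativity and the mass identity from Step 1 are essential. The cutoff argument in Step 1 is routine once the boundedness of $\beta(x,u)/u$ is noted.
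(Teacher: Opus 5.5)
Your proposal is correct and takes essentially the same route as the paper, which proves Corollary \ref{col2} by repeating the argument of Corollary \ref{col1}. That argument consists of mass conservation from (\ref{10}), narrowly continuous versions via the same lemma of \cite{ref22} to get (\ref{4}), and $L^\infty(0,T;L^1\cap L^\infty)\subset L^\infty(0,T;H^{-1})$, followed by Theorem \ref{thm4.1}; your cutoff computation in Step 1, using the boundedness of $\beta(x,u)/u$ and $b(x,u)$, just spells out the mass-conservation check that the paper calls elementary.
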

\begin{proof}
\rm This follows immediately from Theorem \ref{thm4.1} by the same method as in the proof of Corollary \ref{col1}.
 $\hfill\square $    
\end{proof}
The following result then standardly follows (see \cite[Thm.~4.1]{ref4}). For the readers' convenience we include the proof.
\begin{theorem}\label{thm4.2}
Assume that Hypothesis (\MakeUppercase{\romannumeral 1}) holds. Let $T>0$, and let $X^{1}(t)$ and $X^{2}(t)$ be two probabilistically weak solutions to (\ref{9}) on the stochastic bases $(\Omega_{1} ,\mathscr{F}_{1},(\mathscr{F}^{1}_{t})_{t \geqslant 0}, \mathbb{P}_{1})$, $(\Omega_{2} ,\mathscr{F}_{2},(\mathscr{F}^{2}_{t})_{t \geqslant 0}, \mathbb{P}_{2})$ such that $\mathcal{L}_{X^{1}(t)}, \mathcal{L}_{X^{2}(t)}$ are absolutely continuous w.r.t Lebesgue measure $dx$ and for
\begin{equation}
    u_{1}(t,\cdot):=\frac{d\mathcal{L}_{X^{1}(t)}}{dx},\quad u_{2}(t,\cdot):=\frac{d\mathcal{L}_{X^{2}(t)}}{dx},\notag
\end{equation}
we have
 \begin{equation}
    u_{1},u_{2} \in L^{1}((0,T)\times \mathbb{R}^{d})\cap L^{\infty}((0,T)\times \mathbb{R}^{d}) .\notag
 \end{equation}
 Then, $X^{1}$ and $X^{2}$ have the same law, i.e. $\mathbb{P}_{1}\circ (X^{1})^{-1}=\mathbb{P}_{2}\circ (X^{2})^{-1}$.
\end{theorem}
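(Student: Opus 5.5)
We will follow the standard route (as in \cite[Thm.~4.1]{ref4}): first show that the one-dimensional time marginals of $X^{1}$ and $X^{2}$ coincide, using the nonlinear uniqueness result, and then deduce equality of the path laws from the linearized uniqueness result and the uniqueness of the martingale problem.

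\textbf{Step 1: the marginals solve (\ref{1}).} By It\^o's formula applied to $\varphi(X^{k}(t))$, $\varphi\in C_{0}^{\infty}(\mathbb{R}^{d})$, and taking expectations, each $u_{k}$ is a distributional solution to (\ref{1}) with initial datum $u_{0}$. Indeed, the diffusion coefficient of (\ref{9}) gives the generator
\[
\sum_{i,j}a_{ij}(x)\frac{\beta(x,u)}{u}D^{2}_{ij}\varphi+b(x,u)\cdot\nabla\varphi ,
\]
and integrating it against $u_{k}(t,x)\,dx$ yields exactly the terms in (\ref{0}). Here boundedness of $b$ and of $\beta(x,u)/u$ on $[0,|u_{k}|_{\infty}]$ (Hypothesis (I)(i),(iii)) makes the stochastic integrals true martingales. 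The functions $u_{k}$ are nonnegative, lie in $L^{1}\cap L^{\infty}$, and satisfy $u_{0}=\mathcal{L}_{X^{k}(0)}$, which lies in $\mathcal{P}\cap L^{\infty}$ as in the setting of the section. Corollary \ref{col1} therefore gives $u_{1}\equiv u_{2}=:u$. In this step one checks that $t\mapsto\mathcal{L}_{X^{k}(t)}$ is narrowly continuous, which is automatic from path continuity.

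\textbf{Step 2: reduction to a linear SDE.} With $u$ now fixed, both $X^{1}$ and $X^{2}$ are weak solutions of the same linear (time-inhomogeneous) SDE
\[
dX=b(X,u(t,X))\,dt+\sqrt{2\beta(X,u(t,X))/u(t,X)}\,\sigma(X)\,dW ,
\]
whose coefficients are merely bounded and measurable. Equivalently, $P_{k}:=\mathbb{P}_{k}\circ(X^{k})^{-1}$ both solve the martingale problem for the operator $L_{t}=\sum a_{ij}\frac{\beta(x,u)}{u}D_{ij}^{2}+b(x,u)\cdot\nabla$ with initial law $u_{0}dx$.

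\textbf{Step 3: uniqueness of path laws.} To show $P_{1}=P_{2}$ we show that all finite-dimensional distributions agree, by induction on the number of times, via the standard conditioning argument. Fix $0\le s$ and a bounded nonnegative $\mathcal{F}_{s}$-measurable $G$ with $E_{P_{k}}G=1$, where $G$ is a function of the path up to time $s$ (say $G=g(X_{t_{1}},\dots,X_{t_{n}})$ with $t_{i}\le s$). Define $\nu_{k}(t):=\frac{d}{dx}E_{P_{k}}[G\,;X_{t+s}\in dx]$. Each $\nu_{k}$ is a nonnegative distributional solution to the linearized equation (\ref{10}) shifted by time $s$. Its initial datum is the common law $E[G;X_{s}\in dx]$, which agrees for $k=1,2$ by the induction hypothesis. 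Moreover $0\le\nu_{k}\le|g|_{\infty}u(\cdot+s)$, so $\nu_{k}\in L^{1}\cap L^{\infty}$ with density in $L^{\infty}$ initially.

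Corollary \ref{col2}, applied to $u(\cdot+s)$ and after normalization to a probability density, then gives $\nu_{1}=\nu_{2}$. This extends the equality of finite-dimensional distributions by one more time point, so $P_{1}=P_{2}$.

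\textbf{Main obstacle.} The main technical point is Step 3. One must verify the hypotheses of Corollary \ref{col2} for the shifted, reweighted measures $\nu_{k}$: nonnegativity, the $L^{1}\cap L^{\infty}$ bounds (obtained from the domination by $u$), and the initial datum being in $\mathcal{P}\cap L^{\infty}$ after normalization. One must also confirm that $u(\cdot+s)$ is still a distributional solution of (\ref{1}) on $(0,T-s)$, starting from $u(s)$. The latter follows from the narrow continuity noted in the proof of Corollary \ref{col1}.
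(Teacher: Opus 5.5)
Your proposal is correct and follows the paper's proof: It\^o's formula plus Corollary~\ref{col1} give $u_1\equiv u_2$, and then the path laws are identified through the martingale problem for the frozen operator $L_u$ together with the linearized uniqueness of Corollary~\ref{col2}. The only difference is that where the paper simply invokes Lemma 2.12 of \cite{ref25} (with $\mathcal{R}_{[s,T]}$ the class of bounded weakly continuous probability solutions of (\ref{10})), you write out that lemma's conditioning/induction argument by hand, with your domination $0\le\nu_k\le|g|_\infty u(\cdot+s)$ playing the role of its stability condition (2.14); take $s=t_n$ so that the initial datum $E[G;X_s\in dx]$ is actually covered by the induction hypothesis.
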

\begin{proof}
 \rm   By narrow continuity, we have 
 \begin{equation}
     u(0,x)dx=u_{0}(dx)=(\mathbb{P}_{1}\circ X^{1}(0)^{-1})(dx)=(\mathbb{P}_{2}\circ X^{2}(0)^{-1})(dx),\notag
 \end{equation}
 \begin{equation}
    u_{1}(t,\cdot), u_{2}(t,\cdot) \in L^{\infty}(\mathbb{R}^{d}), \; \text{for all} \; t \in (0,T] .\notag
 \end{equation}
By It\^o's formula, $u_{1}$ and $u_{2}$ are probability solutions to the nonlinear Fokker-Planck equation (\ref{0}), hence, by Corollary \ref{col1}, $u_{1} \equiv u_{2}$. Again by It\^o's formula, both marginal laws $\mathbb{P}_{1}\circ (X^{1})^{-1}$ and $\mathbb{P}_{2}\circ (X^{2})^{-1} $ satisfy the martingale problem with initial condition $u_{0}$ for the linear Kolmogorov operator
\begin{equation}
   L_{u}:= \sum\limits_{i,j=1}^{d}\frac{a_{ij}(x)\beta(x,u)}{u}\frac{\partial^{2}}{\partial x_{i}\partial x_{j}}+b(x, u)\cdot \nabla.\notag
\end{equation}
Hence, by Corollary \ref{col2}, the assertion follows from Lemma 2.12 in \cite{ref25}. More specifically, for $s \in [0,t]$, choose the set $\mathcal{R}_{[s,T]}$ appearing in Lemma 2.12 to be the set of all weakly continuous probability solutions $\nu$ of (\ref{10}) such that, for each $t \in [s,T]$, $t>0$, $\nu(t,\cdot) \in L^{\infty}(\mathbb{R}^{d}) \cap L^{1}(\mathbb{R}^{d})$ and $\nu \in L^{1}((0,T)\times \mathbb{R}^{d})\cap L^{\infty}((0,T)\times \mathbb{R}^{d})$. Therefore, (2.9) and (2.14) in \cite{ref25} hold. By Corollary \ref{col2}, condition \romannumeral1) in Lemma 2.12 of \cite{ref25} holds, and hence condition \romannumeral2) in Lemma 2.12 of \cite{ref25} follows; that is, $X^{1}$ and $X^{2}$ have the same law.
 
 $\hfill\square $     
\end{proof}

\section{Nonlinear Markov processes}
The results in this section standardly follow from our distributional uniqueness results in Section 3 (see e.g., \cite{ref13} or \cite{ref28}). For the readers' convenience we recall the necessary definitions and present the precise arguments necessarily to derive them from the main result in \cite{ref28}.\\
\indent We first review the basic notions from \cite{ref28}. Let $\Omega_{s}:=C([s, \infty),\mathbb{R}^{d})$ denote the space of continuous paths in $\mathbb{R}^{d}$ starting at time $s$, equipped with the Borel $\sigma$-algebra $\mathcal{B}(\Omega_{s})$. For $\tau \geqslant s$, 
\begin{equation}
    \pi_{\tau}^{s}: \Omega_{s} \rightarrow \mathbb{R}^{d}, \; \;  \pi_{\tau}^{s}(\omega):=\omega (\tau), \omega \in  \Omega_{s}, \quad \mathcal{F}_{s,r}:=\sigma (\pi_{\tau}^{s}|s \leqslant \tau \leqslant r).  \notag
\end{equation}
Let $\tilde{\mathcal{P}}(\mathbb{R}^{d})$ denote the set of all Borel probability measures on $\mathbb{R}^{d}$, and 
\begin{equation}
 \mathcal{P}_{a}= \left \{ \mu \in \tilde{\mathcal{P}} | \mu \ll  dx
     \right \}, \; \; \mathcal{P}^{\infty}_{a}= \left \{ \mu \in \mathcal{P}_{a} | \frac{d\mu}{dx} \in L^{\infty}(\mathbb{R}^{d})\right \}=\mathcal{P} \cap L^{\infty} .\notag
\end{equation}
As in Section 4, for an initial condition $ (s,\zeta) \in \mathbb{R}^{+} \times \tilde{\mathcal{P}}(\mathbb{R}^{d}) $, we denote the probabilistically weak solution of (\ref{9}) by $ (X_{t}^{s,\zeta})_{t\geqslant s}$, defined on a stochastic basis $(\Omega ,\mathscr{F}, (\mathscr{F}_{t})_{t \geqslant s}, \mathbb{P}) $, and set $\mathbb{P}_{s,\zeta} := \mathbb{P}\circ (X^{s,\zeta}_{\cdot})^{-1}$. Nonlinear Markov processes are given by such families of laws $ \mathbb{P}_{s,\zeta}$, $s \geqslant 0$, on path space, where $\zeta$ ranges over all admissible initial probability measures on $\mathbb{R}^{d}$. The following is the rigorous definition.
\begin{definition}\label{def5.1}(Nonlinear Markov process \cite{ref28}) Let $\mathcal{P}_{0} \subseteq \tilde{\mathcal{P}}(\mathbb{R}^{d})$. A nonlinear Markov process is a family $(\mathbb{P}_{s, \zeta })_{(s,\zeta)\in \mathbb{R}^{+} \times \mathcal{P}_{0}}$ of probability measures $\mathbb{P}_{s,\zeta} $ on $ \mathcal{B}(\Omega_{s})$ such that\\
(\textbf{\romannumeral1}) The marginals $ \mathbb{P}_{s,\zeta} \circ (\pi^{s}_{t})^{-1} =: \mu_{t}^{s, \zeta}  $ belong to $\mathcal{P}_{0}$ for all $ 0 \leqslant s \leqslant t$, $\zeta \in \mathcal{P}_{0}$. \\
(\textbf{\romannumeral2}) The nonlinear Markov property holds, i.e., for all $ 0 \leqslant s \leqslant r \leqslant t$, $\zeta \in \mathcal{P}_{0}$, 
\begin{equation}
  \mathbb{P}_{s,\zeta}(\pi_{t}^{s} \in A |\mathcal{F}_{s,r})(\cdot)=p_{(s,\zeta),(r,\pi_{r}^{s}(\cdot))}(\pi_{t}^{r}\in A) \quad  \mathbb{P}_{s,\zeta}-\text{a.s. \; for all} \; A \in \mathcal{B}(\mathbb{R}^{d}),\notag
\end{equation}
where $p_{(s,\zeta),(r,y)}$, $y \in \mathbb{R}^{d}$, is a regular conditional probability kernel from $\mathbb{R}^{d}$ to $\mathcal{B}(\Omega_{r})$ of $\mathbb{P}_{r,\mu_{r}^{s,\zeta}}[\cdot | \pi_{r}^{r}=y]$, $y \in \mathbb{R}^{d}$ (i.e., in particular $p_{(s,\zeta),(r,y)} \in \tilde{\mathcal{P}}(\Omega_{r})$ and $p_{(s,\zeta),(r,y)}(\pi_{r}^{r}=y)=1 $). 
\end{definition}
\begin{remark}\label{rem5.1}
\rm The one-dimensional time-marginals $\mu_{t}^{s, \zeta} $ of a nonlinear Markov process satisfy the flow property, i.e.,
\begin{equation}
    \mu_{t}^{s, \zeta}=\mu_{t}^{r, \mu_{r}^{s, \zeta}} ,\; \; \forall \; 0 \leqslant s \leqslant r \leqslant t, \;\; \zeta \in \tilde{\mathcal{P}}(\mathbb{R}^{d}), \notag
 \end{equation}
which corresponds to the Chapman-Kolmogorov equations for the linear Markov process.
\end{remark}

\begin{theorem}\label{thm5.1}
 Consider NFPE (\ref{1}), assume that Hypotheses (\textbf{\romannumeral1}'),(\textbf{\romannumeral2}), (\textbf{\romannumeral3}'), (\textbf{\romannumeral4})-(\textbf{\romannumeral5}) hold. Then there exists a nonlinear Markov process $(\mathbb{P}_{s, \zeta })_{(s,\zeta)\in [0,T]\times \mathcal{P}_{a}^{\infty}}$ such that, for each $(s,\zeta)\in [0,T]\times \mathcal{P}_{a}^{\infty} $, the measure $\mathbb{P}_{s, \zeta }$ is the path law of the unique weak solution to the McKean-Vlasov SDE (\ref{9}) with initial condition $(s, \zeta)$. 
\end{theorem}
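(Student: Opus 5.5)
The plan is to verify the hypotheses of the main abstract result of \cite{ref28}, which builds a nonlinear Markov process from a solution flow of the NFPE together with uniqueness for the linearized equations. That result, in the form used in \cite{ref13}, says roughly the following. Suppose $\mathcal{P}_{0}$ admits a solution flow $(\mu^{s,\zeta})$ in the sense of Definition \ref{def2.2}(3). Suppose also that, for each $(s,\zeta)$, the linearized equation (\ref{10}) with $u$ frozen as the density of $\mu^{s,\zeta}$ has at most one weakly continuous probability solution in a suitable class $\mathcal{A}_{s}$ containing $\mu^{s,\zeta}$, for every initial time $r\ge s$ and initial datum in $\mathcal{P}_{0}$. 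Then the path laws $\mathbb{P}_{s,\zeta}$ of the associated McKean--Vlasov solutions form a nonlinear Markov process. We take $\mathcal{P}_{0}=\mathcal{P}_{a}^{\infty}=\mathcal{P}\cap L^{\infty}$ and the class $L^{1}\cap L^{\infty}$ on $(r,T)\times\mathbb{R}^{d}$.

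\textbf{Step 1: solution flow.} For $(s,\zeta)\in[0,T]\times\mathcal{P}_{a}^{\infty}$ we set $\mu_{t}^{s,\zeta}:=S(t-s)\zeta\,dx$, with $S$ the semigroup of Theorem \ref{thm3.1}. By Theorem \ref{thm3.1} this is a distributional probability solution: positivity and mass conservation hold, and $t\mapsto S(t)\zeta$ is $L^{1}$-continuous, hence weakly continuous. Theorem \ref{thm3.2} gives $|S(t-s)\zeta|_{\infty}\le e^{C(t-s)}|\zeta|_{\infty}$, so $\mu_{t}^{s,\zeta}\in\mathcal{P}_{a}^{\infty}$. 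The flow property $\mu_{t}^{s,\zeta}=\mu_{t}^{r,\mu_{r}^{s,\zeta}}$ follows from the semigroup law $S(t-s)=S(t-r)S(r-s)$. Alternatively, both sides are solutions in $L^{1}\cap L^{\infty}$ on $(r,T)$ with the same initial datum, so they coincide by Corollary \ref{col1}; the coefficients are time-homogeneous, so Corollary \ref{col1} applies on shifted intervals. Corollary \ref{col3.2} moreover shows that this is the only solution in the class.

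\textbf{Step 2: existence and uniqueness of McKean--Vlasov solutions.} By the superposition principle, as recalled at the start of Section 4, there is a weak solution $X^{s,\zeta}$ of (\ref{9}) started at time $s$ whose marginal law densities are $S(t-s)\zeta$. We define $\mathbb{P}_{s,\zeta}$ as its path law. This law is unique: Theorem \ref{thm4.2}, applied on $[s,T]$ after a time shift, gives uniqueness among solutions with marginals in $L^{1}\cap L^{\infty}$. This yields condition (i) of Definition \ref{def5.1}.

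\textbf{Step 3: nonlinear Markov property.} Condition (ii) will be derived as in \cite{ref28}. Fix $s\le r$ and consider the regular conditional law of $\mathbb{P}_{s,\zeta}$ restricted to $[r,\infty)$ given $\pi_{r}=y$. For $\mu_{r}^{s,\zeta}$-a.e.\ $y$, this conditional law solves the linear martingale problem for $L_{u}$, with $u=u^{s,\zeta}$ frozen. Mixing such conditional laws over initial densities $\eta\,\mu_{r}^{s,\zeta}$, with $\eta$ bounded, gives curves that solve (\ref{10}) on $[r,T]$. Their densities are bounded by $|\eta|_{\infty}u^{s,\zeta}$, so they lie in $L^{1}\cap L^{\infty}$.

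Corollary \ref{col2}, applied on $[r,T]$, gives linearized uniqueness in this class. With the argument of Step 2 via Lemma 2.12 of \cite{ref25}, this identifies the conditional laws with those of $\mathbb{P}_{r,\mu_{r}^{s,\zeta}}[\,\cdot\,|\pi_{r}^{r}=y]$. This is exactly the identity in Definition \ref{def5.1}(ii).

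The main obstacle is Step 3. The hard part is making sure that every measure to which we must apply linearized uniqueness lies in the class covered by Corollary \ref{col2}, namely bounded densities that are nonnegative probability solutions. This must hold uniformly over all restart times $r$ and all the conditional initial data used in \cite{ref28}. Here the $L^{\infty}$ estimate of Theorem \ref{thm3.2} is essential: it guarantees $u^{s,\zeta}\in L^{\infty}$, so the frozen coefficients $\beta(x,u)/u$ and $b(x,u)$ are bounded, and the comparison $\nu\le|\eta|_{\infty}u^{s,\zeta}$ keeps every linearized solution in $L^{1}\cap L^{\infty}$. This is exactly why Hypothesis (v) is added.
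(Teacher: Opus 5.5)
Your proposal is correct and follows essentially the same route as the paper's proof. You build a solution flow in $\mathcal{P}_{a}^{\infty}$ from the semigroup with the $L^{\infty}$ bound of Theorem \ref{thm3.2} (the paper phrases this via Corollary \ref{col3.2}), get existence by superposition, and use linearized uniqueness from Corollary \ref{col2} as the input for the abstract result of \cite{ref28} (the paper simply cites its Corollary 3.9). Theorem \ref{thm4.2} then identifies $\mathbb{P}_{s,\zeta}$ with the law of the unique weak solution, exactly as in the paper.

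One correction applies if you want to re-derive Definition \ref{def5.1}(ii) yourself as in Step 3. That identity conditions on $\mathcal{F}_{s,r}$, not just on $\pi_{r}$, so you should reweight $\mathbb{P}_{s,\zeta}$ by bounded nonnegative $\mathcal{F}_{s,r}$-measurable functions $G$, not only by $\eta(\pi_{r})$. The resulting marginals are still dominated by $|G|_{\infty}u^{s,\zeta}$, so Corollary \ref{col2} applies unchanged.
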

\begin{proof}
    \rm We identify a measure that is absolutely continuous with respect to Lebesgue measure $dx$ with its density, that is, we identify $u^{s,\zeta}$ with $ (\mu_{t}^{s,\zeta})_{t \geqslant s} $, where $\mu_{t}^{s,\zeta}:=u^{s,\zeta}(t,x)dx $. We first note that all previous theorems continue to hold when the nonlinear Fokker-Planck equation (\ref{1}) is considered on $[s, \infty) \times \mathbb{R}^{d}$ for any $s \geqslant 0$, with an initial condition $u_{0}$ renamed as $\zeta \in \mathcal{P}_{a}^{\infty}$. Then, by Corollary \ref{col3.2}, there exists a weakly continuous probability solution $u^{s,\zeta}(t,x)$ to (\ref{1}) from $(s, \zeta)$ such that 
    \begin{equation}
      u^{s,\zeta}(t,x) \in \bigcap_{T>s}L^{\infty}((s,T)\times \mathbb{R}^{d}) ,\quad u^{s,\zeta}(t, \cdot) \in \mathcal{P}_{a}^{\infty} ,\; \text{for all}\; t \geqslant s, \notag
    \end{equation}
and $\left \{ u^{s,\zeta}(t,x) \right \}_{(s,\zeta) \in \mathbb{R}^{+} \times \mathcal{P}_{a}^{\infty} } $ is a solution flow in $\mathcal{P}_{a}^{\infty}$. By the superposition principle, there exists a probabilistically weak solution $X_{t}^{s,\zeta}$ to (\ref{9}) with initial condition $(s,\zeta) \in [0,T] \times \mathcal{P}_{a}^{\infty} $.     \\
\indent Moreover, by Corollary \ref{col2}, the linearized equation (\ref{18}) has a unique weakly continuous probability solution in $\bigcap_{T>s}L^{\infty}((s,T)\times \mathbb{R}^{d}) $. Therefore, by Corollary 3.9 in \cite{ref28}, there exists a nonlinear Markov process $(\mathbb{P}_{s, \zeta })_{(s,\zeta)\in [0,T]\times \mathcal{P}_{a}^{\infty}}$ with one-dimensional time-marginals $\mathbb{P}_{s,\zeta} \circ (\pi^{s}_{t})^{-1} = u^{s,\zeta}(t,x)dx $. Finally, for $(s,\zeta) \in [0,T] \times \mathcal{P}_{a}^{\infty} $, by Theorem \ref{thm4.2}, $ \mathbb{P}_{s, \zeta }$ is the path law of the unique probabilistically weak solution $X^{s,\zeta}_{t}$ to the McKean-Vlasov SDE (\ref{9}).

$\hfill\square $         
\end{proof}

\section*{Appendix}\label{appendix}
\noindent \textbf{Proof of Theorem \ref{thm3.2}:}\\
\indent Here, we even prove the theorem for general diffusion terms $a_{ij}(x,u)$ satisfying the conditions (H1)-(H3) in \cite{ref13}, Section 2.1, and not only for our special case $ \displaystyle \tilde a_{ij}(x,u)=\frac{a_{ij}(x)\beta(x,u)}{u}$. We first recall the main steps of the proof of the existence of mild solutions in \cite{ref13} (also in \cite{ref4}). We define in the space $L^{1}$ the operator $A_{0}: D(A_{0}) \subset L^{1} \rightarrow L^{1}$,
\begin{equation}
\begin{split}
& A_{0}(u)=- \sum\limits_{i,j=1}^{d} D^{2}_{ij}(a_{ij}(x,u)u)+ \text{div}(b(x,u)u), \quad \forall u \in D(A_{0}) ,\\&
    D(A_{0})= \left\{ u \in L^{1}; \; A_{0}(u) \in L^{1} \right\}.\notag
\end{split}    
\end{equation}
Once the operator $A \subset A_{0}$ is shown to be $m$-accretive in $L^{1}$, the operator $A$ generates a continuous contraction semigroup $S(t)$ in $L^{1}$ given by the exponential formula
\begin{equation} \label{000}
S(t)u_{0}=\lim_{n \rightarrow \infty}\left(I+\frac{t}{n}A\right)^{-n}u_{0}, \quad u_{0} \in L^{1}, \;\forall t \geqslant 0. 
\end{equation}
Formula (\ref{000}) means that $u(t)=S(t)u_{0}$ is a mild solution to the Cauchy problem
\begin{equation}
    \begin{split}
       \displaystyle & \frac{du}{dt}+Au=0 \quad \text{on}\; (0, \infty),\\&
        u(0)=u_{0},
    \end{split}\notag
\end{equation}
and hence a mild solution to NFPE (\ref{*}). Consider the following regularized functions
\begin{flalign}
 a_{ij}^{\epsilon}(x,u):=\phi_{\epsilon}(x,u)a_{ij}(x,u)+(1-\phi_{\epsilon}(x,u))\text{max}(\gamma, \tilde{\gamma})\delta_{ij},   \notag
\end{flalign}
\begin{equation}
 b_{i}^{\epsilon}(x,u):= \phi_{\epsilon}(x,u)b_{i}(x,u)  ,\notag
\end{equation}
for $1 \leqslant i, j \leqslant d$, $x \in \mathbb{R}^{d}$, $u \in \mathbb{R}$, $\tilde{\gamma}:=d \displaystyle \sup_{1 \leqslant i,j \leqslant d} \left|a_{ij} \right|_{\infty} $ such that $ \sum\limits_{i,j=1}^{d}a_{ij}(x,u)\xi_{i}\xi_{j} \leqslant \tilde{\gamma} \left| \xi \right|^{2}$ for any $u \in \mathbb{R}$, $\xi \in \mathbb{R}^{d} $. $\phi_{\epsilon}(x,u):=\phi(\epsilon \left|u \right|)\phi(\epsilon \left|x \right|^{2})$ with $\phi \in C^{\infty}(\mathbb{R})$, $\mathds{1}_{(-\infty, 1]} \leqslant \phi \leqslant 1-\mathds{1}_{[2, \infty)}$, $-2 \leqslant \phi' \leqslant 0$. $a_{ij}^{\epsilon}$ and $b_{i}^{\epsilon}$ also satisfy the following conditions 
\begin{equation}\label{002}
    \begin{split}
 (a_{ij}^{\epsilon,*}(x,u))_{u} \in &C_{b}(\mathbb{R}^{d} \times \mathbb{R}), \quad \text{and for some} \; \tilde C \in (0, \infty), g \in L^{2}, 1 \leqslant i,j \leqslant d,\\&
 \left| (a_{ij}^{\epsilon,*})_{u}(x,u) -(a_{ij}^{\epsilon,*})_{u}(x,\bar{u})\right| \leqslant \tilde C \left| u-\bar{u}\right|,\\&
 \left| \tilde{b}^{\epsilon,*}(x,u) -\tilde{b}^{\epsilon,*}(x,\bar{u})\right| \leqslant g(x) \left| u-\bar{u}\right|, \quad \forall u, \bar{u} \in \mathbb{R}, \; x \in \mathbb{R}^{d}
    \end{split}
\end{equation}
where $a_{ij}^{\epsilon,*}(x,u)=a_{ij}^{\epsilon}(x,u)u $, $b^{\epsilon,*}(x,u)=b^{\epsilon}(x,u)u$, $ \tilde{b}^{\epsilon,*}_{i}(x,u):= b_{i}^{\epsilon,*}-\sum\limits_{j=1}^{d}(a_{ij}^{\epsilon,*})_{x_{j}}$, $\tilde{b}^{\epsilon,*}:=(\tilde{b}^{\epsilon,*}_{1}, \dots , \tilde{b}^{\epsilon,*}_{d})$.
Then for $\lambda_{0}^{*} \in (0,\frac{1}{2}\gamma (b_{\infty}+c_{\infty})^{-2})$, where 
\begin{equation}
   b_{\infty}:= \text{sup}\left\{ \left|b_{i}(x,u) \right|; (x,u) \in \mathbb{R}^{d}\times \mathbb{R}, i=1, \dots ,d \right\} ,\notag
\end{equation}
\begin{equation}
\begin{split}
   c_{\infty}:=\text{sup}&\left\{ \left|(a_{ij})_{x_{j}}(x,u) \right|; (x,u) \in \mathbb{R}^{d}\times \mathbb{R}, i,j=1, \dots ,d \right\} \\&+ 8(\displaystyle \max_{i,j}\left| a_{ij}\right|_{\infty}\cdot \max_{i,j}\left| (a_{ij})_{x_{j}}\right|_{\infty}+\max(\gamma,\tilde{\gamma})) ,\notag
\end{split}
\end{equation}
the equation 
\begin{equation}\label{003}
   u-\lambda \sum\limits_{i,j=1}^{d} D_{ij}^{2} a_{ij}^{\epsilon,*}(x,u)+ \lambda \text{div}(b^{\epsilon}(x,u)u)  =f ,
\end{equation}
has a solution $u_{\epsilon}=u_{\epsilon}(\lambda,f)$ for each $\lambda \in (0, \lambda_{0}]$ and $ \forall f \in L^{1}\cap L^{2}$ (also $\forall f \in L^{1}$). 
We also have, for $\lambda \in (0, \lambda_{0}^{*}]$, $f \in L^{1}$, along a subsequence $\left\{\epsilon \right\} \rightarrow 0$,
\begin{equation}\label{001}
\begin{split}
    u_{\epsilon }(\lambda, f)\rightarrow u=u(\lambda,f)\quad &\text{weakly in} \  H^{1}, \\& \text{strongly in} \ L_{\text{loc}}^{2}, \\& \text{weak-star in} \ L^{\infty}.
  \end{split}
\end{equation}
Defining $u(\lambda,f):=J_{\lambda}(f)$ and $u_{\epsilon}(\lambda,f):=J_{\lambda}^{\epsilon}(f)$ for $f \in L^{1}$, where $J_{\lambda}^{\epsilon}: L^{1} \rightarrow D(A_{0}^{\epsilon}) \subset L^{1}$ and $(A_{0}^{\epsilon}, D(A_{0}^{\epsilon}))$ is defined as $(A_{0}, D(A_{0}))$ with $a_{ij}^{\epsilon}$ and $b_{i}^{\epsilon}$ replacing $a_{ij}$ and $b_{i}$, respectively, we have 
\begin{equation}\label{004}
    J_{\lambda}^{\epsilon}(f) \rightarrow J_{\lambda}(f) \quad \text{strongly in} \;  L^{1} .
\end{equation}
We now continue the proof. Take $M = \displaystyle \frac{\lambda}{\lambda_{0}} \left | f\right|_{\infty}$, where $\lambda_{0}=\displaystyle \text{min}\left\{\frac{\gamma}{2(b_{\infty}+c_{\infty})}, \frac{1}{2C_{0}} \right\}$, $\lambda \in (0, \lambda_{0})$, $C_{0}=2c_{d}\left| \delta \right|_{\infty}$, $\delta(r)=\text{max}\left (\delta_{1}(r),\delta_{2}(r)\right)$, where $\delta_{1}$ is as defined in (\ref{5'}) and $\delta_{2}$ is as defined in (\ref{5''}) with $a_{ij}$ replacing $\tilde a_{ij}$ , and $c_{d}$ is a constant depending only on the dimension $d$. By (\ref{003}), we have 
\begin{equation}\label{005}
  \begin{split}
u_{\epsilon}&- \left | f\right|_{\infty}-M -\lambda \sum\limits_{i,j=1}^{d} D^{2}_{ij}(a^{\epsilon,*}_{ij}(x,u_{\epsilon})-a^{\epsilon,*}_{ij}(x,\left | f\right|_{\infty}+M))\\&+ \lambda \text{div}\left(b^{\epsilon}(x, u_{\epsilon})u_{\epsilon}-b^{\epsilon}(x, \left | f\right|_{\infty}+M)(\left | f\right|_{\infty}+M)\right)=f- \left | f\right|_{\infty}-M\\& -\lambda (\left | f\right|_{\infty}+M) [\text{div}\left(b^{\epsilon}(x, \left | f\right|_{\infty}+M) \right)-\sum\limits_{i,j=1}^{d} D^{2}_{ij}(a^{\epsilon,*}_{ij}(x,\left | f\right|_{\infty}+M) )].
  \end{split}  
\end{equation}
Since
\begin{equation}
    \begin{split}
 -\lambda (\left | f\right|_{\infty}+M) & [\text{div}\left(b^{\epsilon}(x, \left | f\right|_{\infty}+M) \right)-\sum\limits_{i,j=1}^{d} D^{2}_{ij}(a^{\epsilon,*}_{ij}(x,\left | f\right|_{\infty}+M) )]\\& \leqslant  c_{d}(\delta_{1} (\left | f\right|_{\infty}+M )+\delta_{2} (\left | f\right|_{\infty}+M ))\cdot \frac{ \gamma (\left | f\right|_{\infty}+M )}{2(b_{\infty}+c_{\infty})^{2}}
 \\ & \leqslant  \lambda C_{0} (\left | f\right|_{\infty}+M) \\& \leqslant M,   
    \end{split}
\end{equation}
the right-hand side of (\ref{005}) is non-positive. Thus, multiplying (\ref{005}) by $\mathcal{X}_{\delta}((u_{\epsilon}-\left | f\right|_{\infty}-M)^{+}) $ and integrating over $\mathbb{R}^{d}$, by (\ref{002}), we obtain

\begin{align}  
 \begin{split}
&\int_{\mathbb{R}^{d}} (u_{\epsilon}-\left | f\right|_{\infty}-M) \mathcal{X}_{\delta}((u_{\epsilon}-\left | f\right|_{\infty}-M)^{+})dx \\& \leqslant \int_{\mathbb{R}^{d}}\lambda \sum\limits_{i,j=1}^{d} D^{2}_{ij}(a^{\epsilon,*}_{ij}(x,u_{\epsilon})-a^{\epsilon,*}_{ij}(x,\left | f\right|_{\infty}+M)) \cdot \mathcal{X}_{\delta}((u_{\epsilon}-\left | f\right|_{\infty}-M)^{+})dx\\&-\int_{\mathbb{R}^{d}} \lambda \text{div}\left(b^{\epsilon}(x, u_{\epsilon})u_{\epsilon}-b^{\epsilon}(x, \left | f\right|_{\infty}+M)(\left | f\right|_{\infty}+M)\right) \cdot \mathcal{X}_{\delta}((u_{\epsilon}-\left | f\right|_{\infty}-M)^{+})dx \\&= -  \int_{\mathbb{R}^{d}}\lambda \sum\limits_{i,j=1}^{d} D_{i}(a^{\epsilon,*}_{ij}(x,u_{\epsilon})-a^{\epsilon,*}_{ij}(x,\left | f\right|_{\infty}+M)) \cdot D_{j}\mathcal{X}_{\delta}((u_{\epsilon}-\left | f\right|_{\infty}-M)^{+})dx  \\&+ \int_{\mathbb{R}^{d}} \lambda \left(b^{\epsilon}(x, u_{\epsilon})u_{\epsilon}-b^{\epsilon}(x, \left | f\right|_{\infty}+M)(\left | f\right|_{\infty}+M)\right) \cdot \nabla \mathcal{X}_{\delta}((u_{\epsilon}-\left | f\right|_{\infty}-M)^{+})dx \\&= \int_{\mathbb{R}^{d}} \lambda \left(\tilde{b}^{\epsilon,*}(x, u_{\epsilon})-\tilde{b}^{\epsilon,*}(x, \left | f\right|_{\infty}+M)\right) \cdot \nabla \mathcal{X}_{\delta}((u_{\epsilon}-\left | f\right|_{\infty}-M)^{+})dx\\&= \frac{\lambda}{\delta}\int_{[(u_{\epsilon}-\left | f\right|_{\infty}-M)^{+}\leqslant \delta]}  \left(\tilde{b}^{\epsilon,*}(x, u_{\epsilon})-\tilde{b}^{\epsilon,*}(x, \left | f\right|_{\infty}+M)\right) \cdot \nabla u_{\epsilon}dx.
    \end{split}\notag
\end{align}
Since
\begin{equation}
\begin{split}
    \lim_{\delta \rightarrow 0} &\frac{\lambda}{\delta}\int_{[(u_{\epsilon}-\left | f\right|_{\infty}-M)^{+}\leqslant \delta]} \left| \left(\tilde{b}^{\epsilon,*}(x, u_{\epsilon})-\tilde{b}^{\epsilon,*}(x, \left | f\right|_{\infty}+M)\right) \cdot \nabla u_{\epsilon} \right| dx \\& \leqslant \lambda \left|g \right|_{2}  \lim_{\delta \rightarrow 0} \left(\int_{[(u_{\epsilon}-\left | f\right|_{\infty}-M)^{+}\leqslant \delta]} \left| \nabla u_{\epsilon} \right| ^{2}dx \right )^{\frac{1}{2}}=0,
\end{split}    \notag
\end{equation}
we have  
\begin{equation}
  \left| u_{\epsilon}\right|_{\infty} \leqslant (1+\frac{\lambda}{\lambda_{0}}) \left | f\right|_{\infty} .\notag
\end{equation}
Thus, by (\ref{001}) and (\ref{004}), we obtain 
\begin{equation}
    \left| J_{\lambda}(u)\right|_{\infty} \leqslant (1+\frac{\lambda}{\lambda_{0}}) \left | u\right|_{\infty} ,\quad \forall u \in L^{1} \cap L^{\infty},\notag
\end{equation}
take $C=(1/\lambda_{0})$, by (\ref{000}), the proof is complete.
$\hfill\square $

\vspace{1em}
\noindent \textbf{Acknowledgements.} This work was funded by the Deutsche Forschungsgemeinschaft
(DFG, German Research Foundation) - Project-ID 317210226 - SFB 1283. The second named author also gratefully acknowledges financial support by the DFG - IRTG 2235 - Project-ID 282638148.

\end{document}